\numberwithin{equation}{section}
\title{Resolvent Estimates for Non-Self-Adjoint Magnetic Schr\"{o}dinger Operators}
\author{Ben Bellis\\Department of Mathematics\\UCLA\\Los Angeles, CA, 90095\\bbellis@math.ucla.edu}
\date{}
\newtheorem{theorem}{Theorem}[section]
\newtheorem{lemma}[theorem]{Lemma}
\newtheorem{proposition}[theorem]{Proposition}
\newtheorem{cor}[theorem]{Corollary}
\renewcommand{\(}{\left(}
\renewcommand{\)}{\right)}
\newcommand{\R}{\mathbb{R}}
\newcommand{\C}{\mathbb{C}}
\newcommand{\Sch}{\mathcal{S}}
\newcommand{\rr}{\textrm{Re}\,}
\newcommand{\im}{\textrm{Im}\,}
\newcommand{\supp}{\textrm{supp}}
\begin{document}
\maketitle
\begin{abstract}
    We examine semiclassical magnetic Schr\"{o}dinger operators with complex electric potentials.
    Under suitable conditions on the magnetic and electric potentials, 
    we prove a resolvent estimate for spectral parameters in an unbounded parabolic neighborhood of the imaginary axis.
\end{abstract}

\section{Introduction}
In this paper we study non-self-adjoint Schr\"{o}dinger operators with magnetic potentials. 
Non-self-adjoint Schr\"{o}dinger operators appear in a variety of contexts such as the study of resonances \cite{sjlec}, 
Hamiltonians of open systems \cite{ex}, and the damped wave equation \cite{csvw}.
Those with magnetic potential are of particular importance in Ginzburg-Landau theory in the study of superconductivity \cite{ahelf}. 
One difficulty of working with non-self-adjoint operators as opposed to the self-adjoint case is the lack of the spectral theorem. 
Whereas the size of the resolvent of a self-adjoint operator is a function of the distance between the spectral parameter and the operator's spectrum,
there is no analog for non-self-adjoint operators, for which the resolvent can grow large even far away from the spectrum \cite{dsz}, \cite{sjnotes}.
As such, it becomes of interest to study under what circumstances and in which regions of the complex spectral plane
we can establish useful estimates of the size of the resolvent for such operators.

Non-self-adjoint magnetic Schr\"{o}dinger operators have the form 
$$P=\(hD_{x}-A\(x\)\)^{2}+V\(x\),\quad x\in \R^{n},\ D_{x}:=-i\partial_{x}$$
where we will take $V=V_{1}+iV_{2}$, with $V_{1},V_{2}\in C^{\infty}\(\R^{n};\R\)$, $A\in C^{\infty}\(\R^{n};\R^{n}\)$, and $h>0$. 
We refer to $A$ as the magnetic potential and $V$ as the electric potential. We shall study 
such operators in the semiclassical limit, i.e. we shall be concerned with the behavior of the operator in the limit as $h\rightarrow 0$. 
In the context of quantum mechanics
$h$ represents Planck's constant and taking $h$ to be small models the situation where the data is large relative to the quantum scale. 
Our primary goal is to show a particular resolvent estimate
for a broad class of such operators which generalizes one proven in \cite{mine} for the $A=0$ case.

To study such an operator we will use methods involving pseudodifferential operators. For this we will primarily use the Weyl quantization.
For a symbol $a:\R^{2n}\rightarrow \C$, the Weyl quantization of $a$ is given by
\[a^{w}\(x,D_{x}\)u:=\frac{1}{\(2\pi\)^{n}}\int_{\R^{2n}}e^{i\(x-y\)\cdot \xi}a\(\frac{x+y}{2},\xi\)u\(y\)dy d\xi,\]
and the semiclassical Weyl quantization is given by
\[a_{h}^{w}u=a^{w}\(x,hD_{x}\)u:=\frac{1}{\(2\pi\)^{n}}\int_{\R^{2n}}e^{i\(x-y\)\cdot\xi}a\(\frac{x+y}{2},h\xi \)u\(y\)dy d\xi.\]
Using this we can write $P=p^{w}_{h}$ for $p\(x,\xi\)=|\xi-A\(x\)|^{2}+V\(x\)$.

Now let us introduce some notation that we will use throughout this paper. For $X\in\R^{2n}$, $X=\(x,\xi\)$ with $x,\xi\in\R^{n}$.
The notation ``$f\lesssim g$" means there exists a $C>0$, independent of $X$, $h$ and other parameters, such that $f\leq Cg$.
Additionally, the symbol class $S\(m\)$ is defined by 
\[S\(m\):=\{a\in C^{\infty}\(\R^{N}\): |\partial^{\alpha}a|\lesssim m,\quad \forall \alpha\},\]
for $m$ some positive function on $\R^{N}$.
We shall place the following condition on the symbol $p$:
\begin{equation}\label{eq:v1pos}
    V_{1}\geq 0
\end{equation}
\begin{equation}\label{eq:v''s1}
    V''\in S\(1\),
\end{equation}
\begin{equation}\label{eq:A'}
    |A'\(x\)|\lesssim 1,\quad x\in\R^{n},
\end{equation}
\begin{equation}\label{eq:A''}
    A''\in S\(\langle x\rangle ^{-1}\),
\end{equation}
\begin{equation}\label{eq:vv'}
    |V_{2}\(x\)|\lesssim 1+V_{1}\(x\)+|V_{2}'\(x\)|^{2},\quad x\in\R^{n}.
\end{equation}
One implication of these conditions that will be useful is that by \eqref{eq:v1pos} and \eqref{eq:v''s1}
\begin{equation}\label{eq:v1'v1}
    |V_{1}'|\lesssim V_{1}^{1/2},
\end{equation}
as this holds for any nonnegative $C^{2}$ function with bounded second derivatives \cite{zw}.
It then follows that if we define
\[m_{p}\(X\):=1+\rr p\(X\) + |V_{2}\(x\)'|^{2},\]
these conditions collectively imply that
\begin{equation}\label{eq:symbolclass}
    p\in S\(m_{p}\).
\end{equation}
Let $P:=p^{w}\(x,hD_{x}\)$. When regarding $P$ as a closed, unbounded operator on $L^{2}$, we equip $P$ with the maximal domain
$D\(P\):=\{u\in L^{2}\(\R^{n}\): p_{h}^{w}u\in L^{2}\}$. The following is the main result of this paper.
\begin{theorem}\label{thm1}
Let $T\geq 0$ be such that $|V_{2}|-T\lesssim V_{1}+|V_{2}'|^{2}$.
For such $p$ and any $K>1$ there exist constants $C_{0}, M,  h_{0}$ with $0<h_{0}, C_{0}\leq 1$ and $M\geq 2$ such that for all $0<h\leq h_{0}$
and $z\in\C$ with $|z|\geq KT+Mh$ and $\rr z\leq C_{0} h^{2/3}\(|z|-T\)^{1/3}$ $\(P-z\)^{-1}$ exists and we get
\begin{equation}\label{eq:pres}
    \|\(P-z\)^{-1}\|_{L^{2}\rightarrow L^{2}}\lesssim h^{-2/3}\(|z|-T\)^{-1/3}
\end{equation}
\end{theorem}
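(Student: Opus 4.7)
The plan is to derive the \emph{a priori} lower bound
\[\|(P-z)u\|_{L^{2}}\gtrsim h^{2/3}(|z|-T)^{1/3}\|u\|_{L^{2}},\quad u\in D(P),\]
uniformly for $z$ in the stated parabolic region, and then conclude invertibility of $P-z$ and the estimate \eqref{eq:pres} by a continuity argument: for $\rr z$ sufficiently negative $P-z$ is invertible by ellipticity in $S(m_{p})$, the parabolic region is connected, and the analogous bound for $(P-\bar z)^{*}$ (obtained by sending $V_{2}\mapsto -V_{2}$, which preserves all the hypotheses) propagates invertibility throughout.

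The starting point is the Hermitian decomposition $P=A_{1}+iV_{2}$ with $A_{1}:=(hD_{x}-A(x))^{2}+V_{1}(x)$ self-adjoint, which yields the identity
\[\|(P-z)u\|^{2}=\|(A_{1}-\rr z)u\|^{2}+\|(V_{2}-\im z)u\|^{2}+\langle i[A_{1},V_{2}]u,u\rangle.\]
By the Weyl calculus and \eqref{eq:symbolclass}, the operator $i[A_{1},V_{2}]/h$ has semiclassical principal symbol equal to the Poisson bracket $\{\rr p,V_{2}\}=2(\xi-A(x))\cdot\nabla V_{2}(x)$, with remainder in $S(h^{2}m_{p})$ controlled by \eqref{eq:v''s1} and \eqref{eq:A''}. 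A pseudodifferential cutoff then separates the problem into an \emph{elliptic regime} $\{|p-z|\gtrsim|z|-T\}$, where a parametrix in $S(m_{p})$ immediately yields the desired bound, and a \emph{degenerate regime} localized near $p^{-1}(z)$.

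The work is concentrated in the degenerate regime. There $V_{2}(x)\approx\im z$, and since $|\rr z|\ll|z|$ throughout our parameter range (using $K>1$), $|V_{2}|\approx|z|$ and hence $|V_{2}|-T\gtrsim|z|-T$. The stronger hypothesis $|V_{2}|-T\lesssim V_{1}+|V_{2}'|^{2}$ therefore forces $V_{1}+|V_{2}'|^{2}\gtrsim|z|-T$ on the support of the cutoff; since $V_{1}$ is of the size of $(\rr z)_{+}$ there, this yields $|\nabla V_{2}|\gtrsim(|z|-T)^{1/2}$, precisely the subprincipal nondegeneracy needed to run an Airy/subelliptic argument. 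Rescaling at the effective semiclassical parameter $\tilde h:=h(|z|-T)^{-1/2}$ reduces \eqref{eq:pres} to a standard $\tilde h^{2/3}$ subelliptic estimate, which I would obtain by combining the three-term identity above with a sharp G\aa rding inequality applied to $\{\rr p,V_{2}\}$, following the scheme of the $A=0$ argument in \cite{mine}.

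The main obstacle I anticipate is propagating the magnetic contribution faithfully through the rescaling and microlocalization. The field $A(x)$ and its derivatives do not rescale homogeneously in $|z|-T$, and the precise decay rate $A''\in S(\langle x\rangle^{-1})$ in \eqref{eq:A''}---rather than the cleaner $S(1)$---must be invoked carefully to ensure the Moyal remainders in $[A_{1},V_{2}]$ are strictly subordinate to the leading $h^{2/3}(|z|-T)^{1/3}$ gain. The quantitative thresholds $|z|\geq KT+Mh$ and $\rr z\leq C_{0}h^{2/3}(|z|-T)^{1/3}$ in the hypothesis are calibrated precisely to give these error terms room to be absorbed.
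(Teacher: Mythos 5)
Your top-level architecture (an a priori lower bound $\|\(P-z\)u\|\gtrsim h^{2/3}\(|z|-T\)^{1/3}\|u\|$ followed by invertibility via the adjoint/continuity) matches the paper, but the step that actually produces the $h^{2/3}y^{1/3}$ gain is missing, and the mechanism you propose for it would fail. The three-term identity contributes the commutator term $\( i[A_{1},V_{2}]u,u\)$, whose leading symbol $h\{\rr p, V_{2}\}=2h\(\xi-A\)\cdot V_{2}'$ has no sign, so a sharp G\r{a}rding inequality applied to it yields nothing; moreover that term is only of size $O\(h\)\|u\|^{2}$, far short of the required $h^{2/3}y^{1/3}$. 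The subellipticity here is of second-bracket type, $\rr p+H_{\im p}^{2}\rr p=|\xi-A|^{2}+V_{1}+2|V_{2}'|^{2}$, and the paper (like \cite{mine}) extracts the gain not from the three-term identity but by constructing a bounded weight $g$ with $|g|\leq 1$, $|g'|\lesssim h^{-1/2}$ and $\rr q+hH_{\im q}g+C_{2}h\gtrsim h^{2/3}\lambda_{q}^{1/3}$ (Lemma \ref{gexists}), quantizing $2-g$ in the Wick calculus as a bounded multiplier, and then disposing of the region where $\lambda_{q}\lesssim y$ by ellipticity of $q-z$ there --- which is exactly where the hypothesis $|V_{2}|-T\lesssim V_{1}+|V_{2}'|^{2}$ enters, via \eqref{eq:bval}. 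Your heuristic that $|V_{2}'|^{2}\gtrsim y$ near the characteristic set, followed by ``rescale to a standard subelliptic estimate,'' is the right intuition but is not an argument: the reduction to a model and the bookkeeping of powers of $y$ are precisely what the weight-function construction and Section 5 accomplish, and they are not supplied by citing the scheme of \cite{mine}, which itself is this multiplier argument rather than the energy identity you start from.

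Two further gaps are structural. First, the symbolic manipulations you invoke (microlocal cutoff near $p^{-1}\(z\)$, remainders ``in $S\(h^{2}m_{p}\)$,'' Calder\'{o}n--Vaillancourt bounds) are not valid for $p$ itself: every $x$-derivative of $p$ contains a term $\xi\cdot\partial^{\alpha}A\(x\)$, unbounded in $\xi$ even for compactly supported $A$, and \eqref{eq:A''} only helps where $|\xi|\lesssim\langle x\rangle$. The paper resolves this by truncating the magnetic potential where $|\xi|\geq 2R\langle x\rangle$ to form $q$, proving $q''\in S\(1\)$ (Lemma \ref{q''lemma}) and that the lower bound for $q_{h}^{w}-z$ implies the one for $p_{h}^{w}-z$ by an ellipticity argument (Lemma \ref{fs1}, Corollary \ref{qthenp}); you flag the magnetic term as an anticipated obstacle but give no mechanism for it. Second, your estimate is asserted directly for $u\in D\(P\)$, whereas any pseudodifferential derivation is a priori valid only on $\Sch$; passing to the maximal domain requires knowing $\Sch$ is a core for $P$, which the paper proves by a H\"{o}rmander-type commutator argument in Section 6 using $p'\in S\(\langle X\rangle\)$. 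Relatedly, the claim that $P-z$ is invertible for $\rr z\ll 0$ ``by ellipticity in $S\(m_{p}\)$'' is not literally correct (the weight $m_{p}$ contains $|V_{2}'|^{2}$, which $|p-z|$ need not dominate); the paper instead gets invertibility from the lower bounds for $P-z$ and its formal adjoint $\overline{P}-\overline{z}$ on the maximal domain, and you would need the same core/adjoint identification to make either your continuity argument or the adjoint argument rigorous.
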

For convenience, we will write $y:=|z|-T$, and so the conclusion of the theorem can be rewritten as
\[\|\(P-z\)^{-1}\|_{L^{2}\rightarrow L^{2}}\lesssim h^{-2/3}y^{-1/3}\]
for $z\in \C$, $|z|\geq KT+Mh$, $\rr z\leq C_{0}h^{2/3}y^{1/3}$.
Also, note that when $T\neq 0$ the constant $M$ can be made irrelevant by taking $h_{0}$ small enough, and when $T=0$ the constant $K$ is irrelevant.
Theorem \ref{thm1} can thus be considered as applying to two distinct cases: when $T\neq 0$ and thus $|z|\geq KT$ and $y\geq \(K-1\)T\gtrsim 1$, 
and a sharper estimate when $T=0$ and so $|z|\geq Mh$ and $y=|z|$.

To prove this we start by proving the $L^{2}$ lower bound
\begin{equation}\label{eq:pest}
    \|\(P-z\)u\|\gtrsim h^{2/3}y^{1/3}\|u\|,\quad \forall u\in\Sch.
\end{equation}
We do this by showing that it suffices to prove the same estimate for the Weyl quantization of a modified symbol $q$,
obtained by cutting off the magnetic potential in the region where $|x|\ll |\xi|$, taking advantage of the ellipticity of $p$ in this region.
We then prove such an estimate for $q_{h}^{w}$ by constructing a weight function to use as a bounded multiplier 
and then using some symbol calculus of pseudodifferential operators,
following a method very similar to that used in \cite{mine}.
Then we show that this lower bound extends to the maximal domain of $P$ by using a graph closure argument, 
which then implies that the desired resolvent estimate \eqref{eq:pres} holds.

The plan of the paper is as follows. 
In section 2 we construct the modified symbol $q$ and show that working with $q_{h}^{w}-z$ suffices.
In section 3 we construct a weight function $g$ and show that it satisfies a list of properties which will be needed to use it as a bounded multiplier later.
In section 4 we review some of the important properties of the Wick quantization.
In section 5 we prove the desired $L^{2}$ lower bound for $q_{h}^{w}-z$ using the weight function from section 3 and pseudodifferential symbol calculus in both the Wick and Weyl quantizations.
This then implies that \eqref{eq:pest} holds due to section 2.
In section 6 we prove that for a class of symbols which includes $p$, the graph closure of the corresponding Weyl quantization on $\Sch$ has maximal domain. 
We then apply this to conclude that $P-z$ is invertible and attain the desired resolvent estimate.

The reason why we work with a modified symbol is because when deriving $L^2$ estimates using pseudodifferential symbol calculus 
it will be necessary to bound certain derivatives of the symbol
of our operator. Specifically, we will be using the Calder\'{o}n-Vaillancourt theorem which states that for $a\in S\(1\)$, 
\begin{equation}\label{eq:cv}
    \|a^{w}\|_{L^{2}\rightarrow L^{2}}\lesssim \sup\limits_{|\alpha|\leq Mn}\|\partial^{\alpha}a\|_{L^{\infty}},
\end{equation}
where $M>0$ is some global constant, see \cite{zw}.
An obstacle for using this when working with a Schr\"{o}dinger operator with magnetic potential is that 
any derivative of the symbol $p$ with respect to $x$ will have a term of the form $\xi\cdot\partial^{\alpha}A\(x\)$, 
which is unbounded in $\xi$, even when $A$ is compactly supported.
However, in the region where $\langle x\rangle \gtrsim |\xi|$, the condition \eqref{eq:A''} implies that such a term is bounded for $|\alpha|\geq 2$.
Thus it is preferable to work with a modified symbol, $q$, which has the 
magnetic potential cut off in the region where $|\xi|\gtrsim \langle x\rangle$. 

\section{Truncating the Magnetic Potential}
We are able to modify $p$ in the region where $|\xi|$ is much larger than $|x|$ while keeping the same the $L^{2}$ lower bound 
because the derivative bounds on $A$ and $V$ imply that $|p|\sim |\xi|^{2}\sim m_{p}$, and thus $p$ is elliptic in this region. 
Thus, if we change the symbol there while keeping this ellipticity we can expect the new symbol to behave similarly to $p$.

First we recall a couple standard facts of Weyl symbol calculus.
We say that a function $m > 0$ on $\R^{N}$ is an order function if $m\(X\)\lesssim \langle X-Y\rangle^{k} m\(Y\)$ for all $X,Y\in\R^{N}$ and some $k$ (see \cite{zw}).
Let $m_{1}, m_{2}$ be order functions on $\R^{2n}$, and let $f\in S\(m_{1}\), g\in S\(m_{2}\)$.
We then have that $fg, f\# g\in S\(m_{1}m_{2}\)$ with $f\# g$ defined by 
\begin{equation}\label{eq:wcomp}
f_{h}^{w}g_{h}^{w}=\(f\# g\)_{h}^{w}=\(fg + \frac{h}{2 i}\{f,g\} +h^{2}r\)_{h}^{w},
\end{equation}
where 
\[f\# g=e^{\frac{ih}{2}\(D_{\xi}\cdot D_{y}-D_{x}\cdot D_{\eta}\)}f\(x,\xi\)g\(y,\eta\)\bigg|_{\(y,\eta\)=\(x,\xi\)} \]
and
\begin{multline*}
r=-\frac{1}{4}\int_{0}^{1}\(1-t\)e^{\frac{ith}{2}\(D_{\xi}\cdot D_{y}-D_{x}\cdot D_{\eta}\)}\\
\(D_{\xi}\cdot D_{y}-D_{x}\cdot D_{\eta}\)^{2}f\(x,\xi\)g\(y,\eta\)dt \bigg|_{\(y,\eta\)=\(x,\xi\)}.
\end{multline*}
In particular, expanding to first order, given $f$ and $g$ with $f'\in S\(m_{1}\)$ and $g'\in S\(m_{2}\)$ it holds that
\begin{equation}\label{eq:wcomp1}
    f_{h}^{w}g_{h}^{w}=\(fg\)_{h}^{w}+h\(r_{1}\)_{h}^{w},
\end{equation}
for some $r_{1}\in S\(m_{1}m_{2}\)$. Both \eqref{eq:wcomp} and \eqref{eq:wcomp1} can also apply to the non-semiclassical Weyl quantization by taking $h=1$.

Let $\chi\in C_{c}^{\infty}\(\R^{n};[0,1]\)$ be a smooth function with $\chi\(x\)=1$ for all $|x|\leq 1$ and $\chi\(x\)=0$ for all $|x|\geq 2$.
We use $\chi$ to cut off the magnetic potential in the region where $|\xi|$ is large relative to $\langle x \rangle$. Define
$\chi_{t}\(X\):=\chi\(\frac{\xi}{t\langle x\rangle}\)$
and
\begin{equation}\label{eq:qsymbol}
    q\(X\):=|\xi-\chi_{2R}\(X\)A\(x\)|^{2}+V\(x\).
\end{equation}
where $R>0$ is chosen sufficiently large such that $|A\(x\)|^{2},|V\(x\)| \leq \frac{1}{16}R^2\langle x\rangle^{2}$. 

Such an $R$ exists because conditions \eqref{eq:v''s1} and\eqref{eq:A'} imply that $|A\(x\)|\lesssim \langle x\rangle$ and $|V\(x\)|\lesssim \langle x\rangle^{2}$.
Thus for $X\in \supp\(1-\chi_{R}\)$
we have $|\xi|\geq R\langle x\rangle$, and so $|\xi-A\(x\)| \geq \frac{3|\xi|}{4}\geq \frac{3R\langle x\rangle}{4}$ and 
\[|V\(x\)|\leq |\xi-A\(x\)|^{2}\sim |\xi|^{2}.\]
We then see that $|\xi|^{2}$ dominates the other terms of $\rr p$ on the support of $1-\chi_{R}$, and so
\begin{equation}\label{eq:pqell}
    \rr p\sim \rr q \sim m_{p}\(X\)\sim |\xi|^{2}\sim \langle X\rangle^{2},\quad X\in \supp\(1-\chi_{R}\).
\end{equation}
We can see from \eqref{eq:qsymbol} that $\supp\(p-q\)\subseteq\supp\(1-\chi_{2R}\)$. 
Thus $p$ and $q$ are both elliptic of order $m_{p}$ in the region in which $p\neq q$.

Furthermore in this region, $\rr p, \rr q \geq \frac{R^{2}}{2}\langle x\rangle^{2}$ while $|\im p|, |\im q|\leq \frac{R^{2}}{16} \langle x\rangle^{2}$,
so $\rr p\geq 8|\im p|$ and the same holds for $q$.
The conditions of Theorem \ref{thm1} also require that $\rr z\leq C_{0}h^{2/3}y^{1/3}$ and $|z|\geq Mh$ with $M\geq 2$, $C_{0}\leq 1$. Thus 
$\rr z\leq 2^{-2/3}|z|$ which implies that $\rr z\leq |\im z|$.
Thus for such $z$ we have that
for $X\in \supp\(1-\chi_{R}\)$, $p\(X\)$ and $q\(X\)$ lie in a closed cone disjoint from one which contains $z$, as shown in Figure \ref{zregion}.
It follows that $|p\(X\)-z|\sim |p\(X\)|+|z|$, and the same for $q$. 
From this and \eqref{eq:pqell}, we now have that:
\begin{equation}\label{eq:pqz}
    |p\(X\)-z|\sim |q\(X\)-z|\sim \langle X\rangle^{2}+|z|,\quad X\in \supp\(1-\chi_{R}\).
\end{equation}
\begin{figure}[ht]
\centering
\includegraphics[width=.4\textwidth]{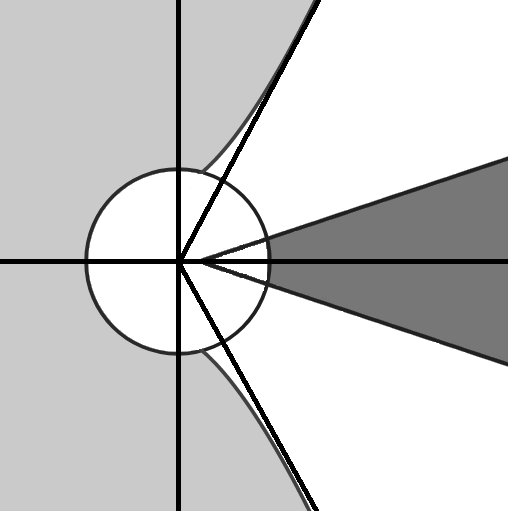}
\caption{The lighter shaded region indicates the values of $z$ for which Theorem \ref{thm1} applies. The ranges of $p\(X\)$ and $q\(X\)$ for $X\in \supp \(1-\chi_{R}\)$
lie within the darker shaded cone around the positive real axis.}
\label{zregion}
\end{figure}

What is convenient about working with this $q$, given in \eqref{eq:qsymbol} is that, unlike $p$, derivatives or order two and higher are bounded.
\begin{lemma}\label{q''lemma}
For $q$ as defined in \eqref{eq:qsymbol}, it holds that
    \begin{equation}\label{eq:q''s1}
        q''\in S\(1\).
    \end{equation}
\end{lemma}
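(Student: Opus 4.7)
The plan is to expand $q$ and show that each resulting piece has second derivatives in $S(1)$. Squaring gives
\[q(X) = |\xi|^2 - 2\chi_{2R}(X)\,\xi\cdot A(x) + \chi_{2R}^2(X)|A(x)|^2 + V_1(x) + iV_2(x).\]
The $|\xi|^2$ and $V$ contributions are immediate: the former has constant Hessian, and the latter satisfies $V''\in S(1)$ by hypothesis \eqref{eq:v''s1}. All the work goes into the cross terms $\chi_{2R}\,\xi\cdot A$ and $\chi_{2R}^2|A|^2$, which both grow like $\langle x\rangle^2$ on the support of the cutoff; I must show the cutoff supplies enough decay to compensate for $|\alpha|\geq 2$.

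The key preliminary estimate is
\[|\partial_X^\alpha \chi_{2R}(X)| \lesssim \langle x\rangle^{-|\alpha|}, \qquad |\alpha|\geq 1,\]
and the same bound for $\chi_{2R}^2$. This follows from the chain rule applied to $\chi_{2R}(x,\xi) = \chi(\xi/(2R\langle x\rangle))$: each $\partial_{\xi_j}$ brings down a factor $(2R\langle x\rangle)^{-1}$, and each $\partial_{x_j}$ produces an expression of size $|\xi|/\langle x\rangle^2 \lesssim \langle x\rangle^{-1}$ on $\supp\chi_{2R}$ (where $|\xi|\lesssim\langle x\rangle$). Boundedness of all derivatives of $\chi$ together with induction handles the higher orders.

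Next I apply the Leibniz rule. For $|\alpha|\geq 2$,
\[\partial^\alpha(\chi_{2R}\,\xi\cdot A) = \sum_{\beta\leq\alpha}\binom{\alpha}{\beta}\,\partial^\beta\chi_{2R}\cdot \partial^{\alpha-\beta}(\xi\cdot A).\]
Since $\xi\cdot A$ is linear in $\xi$, any $\partial^{\alpha-\beta}$ with two or more $\xi$-derivatives annihilates it. The remaining derivatives of $\xi\cdot A$ are controlled by $|A|\lesssim\langle x\rangle$, $|A'|\lesssim 1$, and $|\partial^\gamma A|\lesssim \langle x\rangle^{-1}$ for $|\gamma|\geq 2$, coming from \eqref{eq:A'} and \eqref{eq:A''}; a short case analysis then verifies that the combined decay of the cutoff and the appropriate $A$-derivative always absorbs the growth of the $\xi$-factor, giving each Leibniz term size $\lesssim 1$. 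The term $\chi_{2R}^2|A|^2$ is treated identically, using $|\partial^\gamma|A|^2|\lesssim \langle x\rangle$ for $|\gamma|=1$ and $\lesssim 1$ for $|\gamma|\geq 2$ (the borderline case relying on $A\cdot A''\lesssim 1$), which again combines with the cutoff decay to yield bounded Leibniz terms.

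No part of this is genuinely hard; the main point is the observation that the cutoff supplies exactly the $\langle x\rangle^{-|\alpha|}$ decay needed to defeat the $\langle x\rangle^2$ growth of the cross terms when $|\alpha|\geq 2$. Tracking which of the three hypotheses on $A$ applies to each Leibniz term is the only source of care required.
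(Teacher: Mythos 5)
Your proposal is correct and follows essentially the same route as the paper: expand $q$ into $|\xi|^2$, $V$, and the two magnetic terms, then use the decay $|\partial^\alpha\chi_{2R}|\lesssim\langle x\rangle^{-|\alpha|}$ together with \eqref{eq:A'} and \eqref{eq:A''} in a Leibniz-rule case analysis (the paper merely packages this by first bounding derivatives of $A\chi_{2R}$ as a unit). No gaps.
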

\begin{proof}
Let us expand $q$ and consider each term.
\[q=|\xi|^{2}-2\xi\cdot A \chi_{2R} +\chi_{2R}^{2}|A|^{2}+V.\]
It is trivial that derivatives of order two and higher are bounded for $|\xi|^{2}$ and $V$, due to \eqref{eq:v''s1}.
To see that the same holds for the other two terms, we first observe that
    \begin{equation}\label{eq:chi'}
        |\partial^{\alpha}\chi_{t}\(X\)|\lesssim \langle X\rangle^{-|\alpha|},\quad |\alpha|\geq 0.
    \end{equation}
Then by \eqref{eq:A'} and \eqref{eq:chi'}, 
\[\left|\partial^{\alpha}\(A\(x\)\chi_{2R}\(X\)\)\right|\lesssim 1,\quad |\alpha|\geq 1,\]
and by \eqref{eq:A''} and \eqref{eq:chi'}
\[\left|\partial^{\alpha}\(A\(x\)\chi_{2R}\(X\)\)\right|\lesssim  \langle X\rangle^{-1},\quad |\alpha|\geq 2.\]
With these two estimates we can see that all derivatives of order at least two of $\xi\cdot A \chi_{2R}$ and $\chi_{2R}^{2}|A|^{2}$ are bounded.
\end{proof}
So that we may work with $q$ instead of $p$ we will establish that
\[\|\(q_{h}^{w}-z\)u\|\lesssim \|\(p_{h}^{w}-z\)u\|+O\(h\)\|u\|,\quad u\in\Sch,\ \rr z\leq |\im z|.\]
To see this we will use the following lemma.
\begin{lemma}\label{fs1}
    Define $F\(X\)$ by
    $$
    F(X) = \frac{q\(X\)-z}{p\(X\)-z}\(1-\chi_{R}\(X\)\)+\chi_{R}\(X\),
    $$
    where $z\in\{z\in\C: \rr z\leq |\im z|\}$.
    The symbol $F$ satisfies
    \[F\in S\(1\)\]
    and 
    \[F'\in S\(\langle X\rangle^{-1}\).\]
\end{lemma}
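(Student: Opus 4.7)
My plan is to rewrite
\[
F - 1 = \frac{(q-p)(1-\chi_R)}{p-z}
\]
and verify the required symbol bounds pointwise. I would first observe that $\supp(q-p)\subseteq \{|\xi|\geq 2R\langle x\rangle\}\subseteq\{\chi_R=0\}$, so $(1-\chi_R)$ equals $1$ identically on $\supp(q-p)$, and hence $\supp(F-1)\subseteq \{|\xi|\geq 2R\langle x\rangle\}$. On this set $\langle X\rangle\sim|\xi|$, $\langle x\rangle\leq \langle X\rangle/(2R)$, and by \eqref{eq:pqz} together with the standing assumption $\rr z\leq |\im z|$ we have $|p-z|\sim \langle X\rangle^2+|z|\gtrsim \langle X\rangle^2$.

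The zeroth-order bound is then immediate: $|q-p|\lesssim |\xi||A|+|A|^2\lesssim \langle X\rangle\langle x\rangle \lesssim \langle X\rangle^2$ by \eqref{eq:A'}, so $|F-1|\lesssim 1$. For the higher-order bounds I would apply Leibniz to $(q-p)\cdot (p-z)^{-1}$ combined with Fa\`a di Bruno applied to $(p-z)^{-1}$. The two key pointwise estimates on $\supp(F-1)$, valid for every $|\beta|\geq 1$, are
\[
|\partial^\beta(q-p)|\lesssim \langle X\rangle \quad \text{and} \quad |\partial^\beta p|\lesssim \langle X\rangle.
\]
The first follows by Leibniz from the identity $q-p=2(1-\chi_{2R})\xi\cdot A-(1-\chi_{2R}^2)|A|^2$ using \eqref{eq:A'}, \eqref{eq:A''}, and \eqref{eq:chi'}; the second uses \eqref{eq:v''s1}, \eqref{eq:A'}, \eqref{eq:A''}, \eqref{eq:vv'}, \eqref{eq:v1'v1}, together with $\langle x\rangle\geq 1$ on the support. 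Fa\`a di Bruno then yields $|(p-z)^{-1}|\lesssim \langle X\rangle^{-2}$ and $|\partial^\gamma((p-z)^{-1})|\lesssim \langle X\rangle^{-3}$ for every $|\gamma|\geq 1$. Consequently each summand in the Leibniz expansion of $\partial^\alpha(F-1)$ for $|\alpha|\geq 1$ is at worst of size $\langle X\rangle^2\cdot \langle X\rangle^{-3}$ or $\langle X\rangle\cdot \langle X\rangle^{-2}$, i.e.\ $O(\langle X\rangle^{-1})$. This delivers $F\in S(1)$ and $F'\in S(\langle X\rangle^{-1})$ simultaneously, with constants uniform in the allowed range of $z$.

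The hard part will be the derivative bound on $p$: pure $x$-derivatives $\partial_x^\beta p$ with $|\beta|\geq 2$ contain the term $\xi\cdot \partial_x^\beta A$, which is only bounded by $|\xi|/\langle x\rangle$ rather than by a constant. On $\supp(F-1)$ we always have $\langle x\rangle\geq 1$ and $|\xi|\lesssim \langle X\rangle$, so this contribution is at most $\langle X\rangle$; the factor $|p-z|^{-k-1}\gtrsim \langle X\rangle^{-2k-2}$ in each Fa\`a di Bruno summand is more than enough to absorb the resulting $\langle X\rangle^k$. The bookkeeping here, together with the uniformity of the constants in $z$, is where the bulk of the care goes.
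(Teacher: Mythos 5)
Your proposal is correct and follows essentially the same route as the paper: both arguments rest on the fact that the cutoff is inert on $\supp\(q-p\)$, the ellipticity $|p-z|\gtrsim \langle X\rangle^{2}+|z|$ from \eqref{eq:pqz} under $\rr z\leq|\im z|$, and the derivative bounds $|\partial^{\alpha}p|,|\partial^{\alpha}q|\lesssim\langle X\rangle$ for $|\alpha|\geq 1$. The only difference is organizational: you estimate $F-1=\(q-p\)/\(p-z\)$ directly via Leibniz and Fa\`a di Bruno, while the paper computes $F'$ by the quotient rule (the $\chi_{R}'$ terms cancelling because $p=q$ on $\supp\(\chi_{R}'\)$) and then concludes by multiplying symbol classes.
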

\begin{proof}
Recall that when we say $a\in S\(m\)$ for a symbol $a$ and symbol class $S\(m\)$, the implicit constants in the derivative bounds are independent of $z$ 
even when $a$ or $m$ depends on $z$.

To understand $F$, note that, by \eqref{eq:pqz}, both $p-z$ and $q-z$ are elliptic on $\supp\(p-q\)\subset \supp\(1-\chi_{R}\)$,
so whenever $p-z=0$ we also have that
$q-z=0$ and $F=1$.
Thus $F$ is equal to $\(q-z\)/\(p-z\)$ everywhere that the latter is defined, and $F\(X\)=1$ when $|\xi|\leq 2R\langle x\rangle$. In particular
\begin{equation}\label{eq:ptq}
    \(p-z\)F=q-z.
\end{equation}

We can then see that the bounds in the lemma are only nontrivial when $p\neq q$, 
i.e. for $X\in \supp\(1-\chi_{2R}\)$, and that 
$\supp\(F'\)\subseteq \supp\(1-\chi_{2R}\)$. 
In that region, \eqref{eq:pqz} implies that $|F|\lesssim 1$ uniformly in $z$ so it remains to check the size of the derivatives there.

By \eqref{eq:q''s1}, $q'\in S\(\langle X\rangle\)$ and $q\in S\(\langle X\rangle^{2}\)$.
Similarly, observe that the derivative bounds on $V$ and $A$, \eqref{eq:v''s1}, \eqref{eq:A'}, and \eqref{eq:A''}, imply the same for $p$:
\begin{equation}\label{eq:p'x}
    |\partial^{\alpha}p|\lesssim \langle X\rangle, \quad |\alpha|\geq 1.
\end{equation}
So we can say $p'\in S\(\langle X\rangle \)$, and $p\in S\(\langle X\rangle^{2}\)$ by \eqref{eq:symbolclass}.

Thus $p-z, q-z\in S\(\langle X\rangle^{2}+|z|\)$, and $|p-z|\gtrsim \langle X\rangle^{2}+|z|$ on $\supp\(1-\chi_{R}\)$ by \eqref{eq:pqz}.
It then follows from \eqref{eq:pqz} and \eqref{eq:chi'} that
\[\frac{1-\chi_{R}}{p-z}\in S\(\frac{1}{\langle X\rangle^{2}+|z|}\).\]
Using that $\chi_{R}=0$ on $\supp\(F'\)$ and $p=q$ on $\supp\(\chi_{R}'\)$, we have
\[F'=\(1-\chi_{R}\)\frac{\(p-z\)q'-\(q-z\)p'}{\(p-z\)^{2}}-\chi_{R}'+\chi_{R}'\]
\[=\(\frac{1-\chi_{R}}{p-z}\)^{2}\(\(p-z\)q'-\(q-z\)p'\).\]
Multiplying the symbol classes of these factors together yields
\[F'\in S\(\frac{1}{\(\langle X\rangle^{2}+|z|\)^{2}}\(\langle X\rangle^{2}+|z|\)\langle X\rangle\)\subseteq S\(\langle X\rangle^{-1}\).\]
As we have already shown that $|F|\lesssim 1$, this implies that $F\in S\(1\)$.
\end{proof}
This leads to the following.
\begin{cor}\label{qthenp}
Suppose it is true that
\begin{equation}\label{eq:qest}
    \|\(q_{h}^{w}-z\)u\|\gtrsim h^{2/3}y^{1/3}\|u\|,\quad u\in\Sch,
\end{equation}
for $z\in \C$ satisfying the hypotheses of Theorem \ref{thm1}.
Then
\[ \|\(p_{h}^{w}-z\)u\|\gtrsim h^{2/3}y^{1/3}\|u\|.\]
\end{cor}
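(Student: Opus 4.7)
The plan is to use Lemma~\ref{fs1} and the pointwise identity \eqref{eq:ptq}, $F(p-z)=q-z$, to transfer the $L^2$ lower bound \eqref{eq:qest} from $q_h^w-z$ to $p_h^w-z$. Since $F$ and its first derivative sit in favorable symbol classes, multiplying $p_h^w-z$ on the left by $F_h^w$ should produce $q_h^w-z$ up to a Weyl-calculus error of size $O(h)$ on $L^2$.

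First I will apply the first-order composition formula \eqref{eq:wcomp1} with $f=F$ and $g=p-z$. By Lemma~\ref{fs1}, $F'\in S(\langle X\rangle^{-1})$, and by \eqref{eq:p'x}, $p'\in S(\langle X\rangle)$; together with \eqref{eq:ptq} this yields
\[
F_h^w\bigl(p_h^w-z\bigr)=\bigl(q_h^w-z\bigr)+h(r_1)_h^w,\qquad r_1\in S\bigl(\langle X\rangle^{-1}\cdot\langle X\rangle\bigr)=S(1).
\]
Since both $F$ and $r_1$ lie in $S(1)$, the Calder\'{o}n-Vaillancourt theorem \eqref{eq:cv} makes $F_h^w$ and $(r_1)_h^w$ bounded on $L^2$ uniformly in $z$ and $h$.

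Next, I will apply this operator identity to $u\in\Sch$ and invoke the hypothesis \eqref{eq:qest}, which gives
\[
h^{2/3}y^{1/3}\|u\|\lesssim \|(q_h^w-z)u\|\leq \|F_h^w\|\,\|(p_h^w-z)u\|+h\|(r_1)_h^w u\|\lesssim \|(p_h^w-z)u\|+h\|u\|.
\]
To finish, I will absorb the $h\|u\|$ term: the hypothesis $|z|\geq KT+Mh$ with $K>1$, $T\geq 0$ forces $y\geq Mh$, hence $h\leq M^{-1/3}h^{2/3}y^{1/3}$, so taking $M$ sufficiently large (depending only on the implicit constants above, and in particular independent of $h$ and $z$) absorbs the error and yields the desired bound.

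The only real subtlety I anticipate is verifying that the Weyl remainder $r_1$ actually lands in $S(1)$. The symbol $p$ itself does not have uniformly bounded derivatives---the magnetic cross term $\xi\cdot A(x)$ makes $p'$ grow like $\langle X\rangle$ by \eqref{eq:p'x}---so for a generic right factor the remainder would not be $L^2$-bounded. The extra decay $F'\in S(\langle X\rangle^{-1})$ established in Lemma~\ref{fs1} is exactly what cancels this growth inside the composition, keeping $r_1$ in $S(1)$ and making Calder\'{o}n-Vaillancourt available.
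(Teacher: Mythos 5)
Your proposal is correct and follows essentially the same route as the paper: both use Lemma \ref{fs1}, the identity \eqref{eq:ptq}, the first-order composition formula \eqref{eq:wcomp1} (with the key cancellation $F'\in S(\langle X\rangle^{-1})$ against $p'\in S(\langle X\rangle)$ giving an $S(1)$ remainder), the Calder\'{o}n--Vaillancourt theorem, and absorption of the $O(h)\|u\|$ error via $y\geq Mh$ with $M$ large. The only cosmetic difference is that you state the composition as an exact operator identity with remainder $h(r_1)_h^w$ while the paper phrases it as a norm inequality; the content is identical.
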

\begin{proof}
As $F\in S\(1\)$ by Lemma \ref{fs1}, it follows from the Calder\'{o}n-Vaillancourt Theorem \eqref{eq:cv} that
\[\|F^{w}_{h}\(p^{w}_{h}-z\)u\|\lesssim \|\(p^{w}_{h}-z\)u\|.\]
Then because $F'\in S\(\langle X\rangle^{-1}\)$ and $p'\in S\(\langle X\rangle\)$, as noted in Lemma \ref{fs1}, using \eqref{eq:wcomp1}, \eqref{eq:ptq} and \eqref{eq:cv} yields
\[\|F^{w}_{h}\(p^{w}-z\)u\|\geq\|\(F\(p-z\)\)^{w}_{h}u\|-O\(h\)\|u\|=\|\(q^{w}_{h}-z\)u\|-O\(h\)\|u\|.\]
Thus
\[\|\(p^{w}_{h}-z\)u\|\gtrsim \|\(q^{w}_{h}-z\)u\|-O\(h\)\|u\|\gtrsim \(h^{2/3}y^{1/3}-O\(h\)\)\|u\|.\]
We required that $y\geq Mh$ for some large $M>0$, so taking $M$ large enough we get the desired estimate.
\end{proof}
Thus when proving Theorem \ref{thm1} we can get the desired lower bound for $P$, \eqref{eq:pest}, by showing that \eqref{eq:qest} holds. So our 
goal now is to show \eqref{eq:qest}.

\section{The Weight Function}
Note that for the original symbol $p$ we have the subellipticity property
\[\rr p + H_{\im p}^{2}\rr p = |\xi-A|^{2}+V_{1}+2|V_{2}'|^{2}\geq 0.\]
We use this as a basis for constructing a weight function $g$, to be used to form a bounded multiplier.

Let 
\[\lambda_{p}:=|\xi-A|^{2}+V_{1}+2|V_{2}'|^{2}=\rr p + 2|V_{2}'|^{2},\]
and 
\[\lambda_{q}:=\left|\xi-\chi_{2R}A\right|^{2}+V_{1}+2|V_{2}'|^{2}=\rr q +2|V_{2}'|^{2}.\]
Let $\psi\in C_{c}^{\infty}\(\R;[0,1]\)$ be a cutoff function with $\psi\(t\)=1$ for $|t|\leq \frac{1}{2}$ and $\psi\(t\)=0$ for $|t|\geq 1$.
\begin{lemma}\label{gexists}
There exists a function $g\in C^{\infty}\(\R^{2n};\R\)$ such that:
\begin{equation}\label{eq:gbdd}
    |g|\leq 1,
\end{equation}
\begin{equation}\label{eq:g'bdd}
    |g'|\lesssim h^{-1/2},
\end{equation}
and
\begin{equation}\label{eq:qwt}
    \rr\(X\)+ hH_{\im q}g\(X\) + C_{2}h\gtrsim h^{2/3}\lambda_{q}\(X\)^{1/3},\quad X\in\R^{2n}
\end{equation}
for some $C_{2}>0$ and all $h>0$ sufficiently small.
\end{lemma}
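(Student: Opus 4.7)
The plan is to adapt the construction from \cite{mine} for the $A=0$ case to the modified symbol $q$. The backbone is the subellipticity of $\rr q$ along the Hamilton flow of $\im q$: because $\im q = V_2(x)$ depends only on $x$, one has $H_{\im q} = V_2'(x)\cdot\nabla_\xi$, and a direct computation gives $H_{\im q}^2\rr q = 2|V_2'|^2 + E_1(X)$ with $E_1$ bounded by Lemma \ref{q''lemma}. Combined with $\rr q \geq V_1$, this yields
\[
\rr q(X) + H_{\im q}^2 \rr q(X) + O(1) \gtrsim \lambda_q(X),\qquad X \in \R^{2n}.
\]
A standard companion bound $|H_{\im q}\rr q|^2 \lesssim \rr q$ follows from the nonnegativity of $\rr q$ and $q'' \in S(1)$.

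With these in hand, I would split $\R^{2n}$ into a \textbf{good region} $G = \{\rr q \geq c_0 h^{2/3}\lambda_q^{1/3}\}$, where \eqref{eq:qwt} is immediate with $g \equiv 0$, and a \textbf{bad region} $B$, where $|H_{\im q}\rr q| \lesssim h^{1/3}\lambda_q^{1/6}$ and (once $\lambda_q$ is large enough that the $O(1)$ error above is dominated) $H_{\im q}^2 \rr q \gtrsim \lambda_q$. The prototype for the weight is
\[
g(X) = -\Theta(X)\,\Phi\!\left(\frac{H_{\im q}\rr q(X)}{h^{1/3}\lambda_q(X)^{1/6}}\right),
\]
with $\Phi \in C^\infty(\R;[-1,1])$ odd and $\Phi'(0) > 0$, and $\Theta$ a smooth cutoff to a neighbourhood of $B \cap \{\lambda_q \gtrsim 1\}$. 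A calculation then yields a principal contribution $hH_{\im q}g \gtrsim h^{2/3}\lambda_q^{5/6}$ there, which dominates the target $h^{2/3}\lambda_q^{1/3}$ as soon as $\lambda_q \gtrsim 1$. The complementary regime $\lambda_q \lesssim 1$, where the target is at most $h^{2/3}$, is handled by a separate similarly-structured weight supported where $\lambda_q$ is of unit size, and the subregime $\lambda_q \lesssim h$ is absorbed trivially into $C_2 h$ with $g = 0$.

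The main obstacle will be enforcing \eqref{eq:g'bdd} and \eqref{eq:qwt} simultaneously. The bound $|g|\leq 1$ comes for free from choosing $\Phi$ and $\Theta$ bounded. The derivative bound $|g'|\lesssim h^{-1/2}$ is the delicate one: naively, each derivative of the prototype costs a factor $(h^{1/3}\lambda_q^{1/6})^{-1}$, giving only $|g'| \lesssim h^{-1/3}\lambda_q^{-1/6}$, which matches $h^{-1/2}$ precisely when $\lambda_q \gtrsim h^{-1}$. In the intermediate range $1 \lesssim \lambda_q \lesssim h^{-1}$ one must rescale the argument of $\Phi$ by a $\lambda_q$-dependent factor and patch via a partition of unity whose derivatives are controlled using $\lambda_q \in S(\lambda_q)$ (a consequence of \eqref{eq:v''s1}, \eqref{eq:A'}, \eqref{eq:A''}, and Lemma \ref{q''lemma}). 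Checking that the commutator and transition errors from this patching remain $O(h)$, so as to be absorbable into $C_2 h$, is the technical core of the argument, and is where the structure of $q$ is most heavily used.
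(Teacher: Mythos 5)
There is a genuine gap, and it sits at the heart of the construction: your calibration of the weight is off, and as a result \eqref{eq:qwt} fails on part of the bad region. The error enters with the claim that $|H_{\im q}\rr q|\lesssim h^{1/3}\lambda_q^{1/6}$ on $B$. Since $H_{\im q}\rr q$ is (up to cutoff terms) $-2V_{2}'\cdot(\xi-\chi_{2R}A)$, the correct bound is $|H_{\im q}\rr q|\lesssim |V_{2}'|\,(\rr q)^{1/2}\lesssim \lambda_q^{1/2}\,h^{1/3}\lambda_q^{1/6}=h^{1/3}\lambda_q^{2/3}$; the factor $\lambda_q^{1/2}$ cannot be dropped because $|V_{2}'|$ is unbounded (for the same reason your "companion bound" $|H_{\im q}\rr q|^{2}\lesssim \rr q$ is false as stated). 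Consequently the argument of your bounded function $\Phi$, namely $H_{\im q}\rr q/(h^{1/3}\lambda_q^{1/6})$, is not $O(1)$ on $B$: it can be as large as $\lambda_q^{1/2}$. Where it is large, $\Phi$ saturates, $\Phi'$ vanishes (or is negligible), and the "principal contribution" $hH_{\im q}g\gtrsim h^{2/3}\lambda_q^{5/6}$ disappears, while $\rr q$ there can be far below the target: e.g. take $V_{1}=0$ locally, $|V_{2}'|\sim\lambda_q^{1/2}$ large, and $\xi-\chi_{2R}A$ of size $h^{1/3}\lambda_q^{-1/4}$ aligned with $V_{2}'$; then the argument of $\Phi$ is $\sim\lambda_q^{1/12}\to\infty$ while $\rr q\sim h^{2/3}\lambda_q^{-1/2}\ll h^{2/3}\lambda_q^{1/3}$, so neither term on the left of \eqref{eq:qwt} produces $h^{2/3}\lambda_q^{1/3}$. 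The fact that your claimed gain $h^{2/3}\lambda_q^{5/6}$ strictly exceeds the target is itself a warning sign. The repair is exactly the paper's normalization: take the weight linear in $H_{\im}\rr$, of the form $\epsilon h^{-1/3}(H_{\im p}\rr p)/\lambda_p^{2/3}$, and implement the saturation not through a bounded function of $H_{\im}\rr$ but through the cutoff $\psi(\rr p/(h^{2/3}\lambda_p^{1/3}))$ in $\rr p$; then the argument is automatically bounded on the support, the gain is $h^{2/3}\lambda^{1/3}$ (precisely the target, no more), and $|g|\leq 1$ follows by choosing $\epsilon$ small.

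Two further points. First, working with $q$ directly is costlier than you allow: $H_{\im q}^{2}\rr q=2|V_{2}'|^{2}+E_{1}$ where $E_{1}$ is \emph{not} bounded — it is $O(|V_{2}'|^{2})$ times small factors, supported where $\chi_{2R}$ varies — so it cannot be absorbed as an $O(1)$ error. The paper sidesteps this by building $g$ from $p$, for which $H_{\im p}^{2}\rr p=2|V_{2}'|^{2}$ is exact, proving the inequality \eqref{eq:qwt} with $p$ in place of $q$, and then transferring to $q$ using the ellipticity $\rr q\sim\lambda_q\sim|\xi|^{2}\gtrsim 1$ on the set where $p\neq q$ (see \eqref{eq:pqell}); your route would need an analogous ellipticity argument to control $E_{1}$, which you do not supply. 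Second, your derivative-count contains an arithmetic slip: $h^{-1/3}\lambda_q^{-1/6}\leq h^{-1/2}$ exactly when $\lambda_q\geq h$, not $\lambda_q\gtrsim h^{-1}$. With the correct normalization one checks (as in the paper, using \eqref{eq:v1'v1} and $|\lambda_p'|\lesssim\lambda_p^{1/2}$) that all first derivatives are $O(h^{-1/2})$ as soon as $\lambda_p\gtrsim h$, so the only extra device needed is the outer cutoff $1-\psi(\lambda_p/2h)$ killing the region $\lambda_p\lesssim h$ (absorbed by $C_{2}h$). The $\lambda_q$-dependent rescaling and partition of unity over $1\lesssim\lambda_q\lesssim h^{-1}$, which you defer as "the technical core," is thus both unnecessary and, being the step where your argument would have to do real work, left unexecuted.
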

\begin{proof}
Define $G\(X\)$ in the region where $\lambda_{p}\(X\)\geq h$ by
\[G=\epsilon h^{-1/3}\frac{H_{\im p}\rr p}{\lambda_{p}^{2/3}}\psi\(\frac{\rr p}{h^{2/3}\lambda_{p}^{1/3}}\),\]
with $\epsilon>0$, to be chosen later, independent of $h$.
Then we define $g$ by
\[g=\(1-\psi\(\frac{\lambda_{p}}{2h}\)\)G.\]
We will first show that the following hold where $G$ is defined:
\[|G|\lesssim \epsilon,\]
\[|G'|\lesssim \epsilon h^{-1/2},\]
and
\begin{equation}\label{eq:pineq}
    \rr p+hH_{\im p}G\gtrsim h^{2/3}\lambda_{p}^{1/3}.
\end{equation}
Then we will use these to show the desired properties for $g$.
Proving this works almost identically to the $A=0$ case from \cite{mine}.
The support of $G$ is contained in the region where $\rr p\leq h^{2/3} \lambda_{p}^{1/3}$, so we see that since $\psi \leq 1$ 
\[|G\(X\)| \leq \epsilon h^{-1/3}\frac{2|V_{2}'\(x\)||\xi-A|}{\lambda_{p}\(X\)^{2/3}}\psi\(\frac{\rr p}{h^{2/3}\lambda_{p}^{1/3}}\)\]
\[
\lesssim \epsilon h^{-1/3}\frac{\lambda_{p}^{1/2}\(h^{1/3} \lambda_{p}^{1/6}\)}{\lambda_{p}^{2/3}}
\lesssim \epsilon.
\]
Using \eqref{eq:v''s1}, \eqref{eq:A'}, and \eqref{eq:v1'v1} we get that
\begin{equation}\label{eq:rep'rep}
    |\rr p'|\lesssim \(\rr p\)^{1/2},
\end{equation}
and
\begin{equation}\label{eq:lp'lp}
    |\lambda_{p}'|\lesssim |\rr p'|+ |V_{2}'|\lesssim \lambda_{p}^{1/2}.
\end{equation}
Then to estimate $|G'|$ we have the following estimates on the support of $G$, using the above and that $|\xi-A|\leq h^{1/3}\lambda_{p}^{1/6}$ in this region:
\begin{equation}\label{eq:est1} \left|\frac{H_{\im p}\rr p}{\lambda_{p}^{2/3}}\right| = O\(h^{1/3}\), \end{equation}
\begin{equation}\label{eq:est2}\left|\partial^{\alpha} \frac{H_{\im p}\rr p}{\lambda_{p}^{2/3}}\right| = 
O\(\lambda_{p}^{-1/6}\) = O\(h^{-1/6}\) ,\quad  |\alpha|=1,\end{equation}
\begin{equation}\label{eq:est3}\left|\partial^{\alpha}\(\psi\(\frac{\rr p}{h^{2/3} \lambda_{p}^{1/3}}\)\)\right| = \end{equation}
\[= O\(h^{-1/3}\lambda_{p}^{-1/6}+\lambda_{p}^{-1/2}\) = O\(h^{-1/2}\),\quad  |\alpha|=1.\]
Thus by \eqref{eq:est1}, \eqref{eq:est2}, and \eqref{eq:est3},
$$|G'| = \epsilon h^{-1/3}\( O\(h^{-1/6}\) + O\(h^{1/3} h^{-1/2}\) \) = O\(\epsilon h^{-1/2}\),$$
which verifies that $|G'|\lesssim \epsilon h^{-1/2}$.

Now we shall attain \eqref{eq:pineq} in the region where 
\[\rr p \leq\frac{1}{4}h^{2/3} \lambda_{p}^{1/3}\leq \frac{1}{4}\lambda_{p},\] 
and so $2|V_{2}'\(x\)|^{2}\geq \frac{3}{4}\lambda_{p}\(X\)$.
In this region $\psi\(\frac{\rr p}{h^{2/3}\lambda_{p}^{1/3}}\)\equiv 1$, and so $G =\epsilon h^{-1/3}\frac{H_{\im p}\rr p}{\lambda_{p}^{2/3}}$.
Now we get
\begin{equation}\label{eq:hgv}
H_{\im p}G=\epsilon h^{-1/3}\(\frac{2|V_{2}'\(x\)|^{2}}{\lambda_{p}^{2/3}}- \frac{8 \(V_{2}'\(x\)\cdot \(\xi-A\)\)^{2}}{3 \lambda_{p}^{5/3}}\).
\end{equation}
Thus 
$$
{\rm Re}\,  p\(X\) + h H_{\im p} G\(X\) = \rr p\(X\) +\epsilon h^{2/3}\(\frac{2|V_{2}'\(x\)|^{2}}{\lambda_{p}^{2/3}} 
-\frac{8\(V_{2}'\(x\)\cdot\(\xi-A\)\)^{2}}{3 \lambda_{p}^{5/3}}\)$$
$$\geq \rr p\(X\) + \epsilon h^{2/3}\(\frac{2|V_{2}'\(x\)|^2}{\lambda_{p}^{2/3}} - \frac{2|V_{2}'\(x\)|^{2}}{3 \lambda_{p}^{2/3}}\)
\geq \epsilon h^{2/3}\frac{4|V_{2}'\(x\)|^2}{3\lambda_{p}^{2/3}}$$
$$\geq \frac{1}{2}\epsilon h^{2/3} \lambda_{p}^{1/3}.$$
It remains to show the bound in the region where $\rr p\geq \frac{1}{4}h^{2/3} \lambda_{p}^{1/3}$.
Using \eqref{eq:est1}, \eqref{eq:est2}, and \eqref{eq:est3} we get that 
\begin{multline*}
|hH_{\im p}G|\leq\epsilon h^{2/3}\lambda_{p}^{1/2} O\(\lambda_{p}^{-1/6} \) \\
+ \epsilon h^{2/3}\lambda_{p}^{1/2}O\(h^{1/3}\(h^{-1/3} \lambda_{p}^{-1/6} + \lambda_{p}^{-1/2}\)\)\\
=O\(\epsilon h^{2/3}\lambda_{p}^{1/3}\).
\end{multline*}
Now for $\epsilon$ sufficiently small we get
$$\rr p + hH_{\im p}G \gtrsim h^{2/3}\lambda_{p}^{1/3} - O\(\epsilon  h^{2/3}\lambda_{p}^{1/3}\) \gtrsim  h^{2/3}\lambda_{p}^{1/3}.$$
Thus $G$ has all of the claimed properties, and we will now show the corresponding properties for $g$.

As $|G|\lesssim \epsilon$, the same holds for $g$, and we now fix the value of $\epsilon$ by choosing it small enough such that $|g|\leq 1$.
To check that $|g'|\lesssim h^{-1/2}$ we 
note that in the region where $\lambda_{p} \geq 2h$, we have already shown that it holds as $g=G$ there. When $\lambda_{p}<h$
it holds trivially as $g=0$ there.
In the intermediate region where $h\leq \lambda_{p} \leq 2h$, we see, by using \eqref{eq:lp'lp}, 
\[|g'|\lesssim |G'|+\left|\frac{\lambda_{p}'}{2h}G\right| \lesssim h^{-1/2}+ \frac{\lambda_{p}^1/2}{h}|G|\lesssim h^{-1/2},\]
thus verifying \eqref{eq:g'bdd}.
To attain \eqref{eq:qwt} we will show the corresponding bound with $q$ replaced by $p$,
\begin{equation}\label{eq:pwt}
    \rr p\(X\) +hH_{\im p}g\(X\) + C_{2}h\gtrsim h^{2/3}\lambda_{p}\(X\)^{1/3},\quad \forall X\in\R^{2n},
\end{equation}
and then show that this implies desired inequality with $q$.
To check \eqref{eq:pwt} we first note that we have already shown that it holds where $\lambda_{p}\geq 2h$ in \eqref{eq:pineq}. 
When $\lambda_{p}<2h$ we use \eqref{eq:g'bdd} to see that
\begin{equation}\label{eq:himpg}
    |hH_{\im p}g|\lesssim h^{1/2}|V_{2}'|\lesssim h.
\end{equation}
Thus, choosing $C_{2}$ sufficiently large,
\[\rr p\(X\)+hH_{\im p}g\(x\)+C_{2}h\gtrsim h\gtrsim h^{2/3}\lambda_{p}^{1/3}.\]
Now it remains to show that this implies the related fact for $q$, \eqref{eq:qwt}.
In the region where $p=q$ this implication is trivial. In the region where $p$ and $q$ can differ, 
i.e. where $|\xi|\geq 2R\langle x\rangle$, we recall from \eqref{eq:pqell} that
\[\rr q\sim \lambda_{q}\sim |\xi|^{2}\gtrsim 1.\]
Then, as
\[|hH_{\im q}g| \lesssim h^{1/2}|V_{2}'|\lesssim h^{1/2}\lambda_{q}^{1/2},\]
we can see that $\rr q$ is the dominant term on the left-hand side of $\eqref{eq:qwt}$ in this region, 
 and so 
 \[\rr q-O\(h^{1/2}\(\rr q\)^{1/2}\)+ C_{2}h\gtrsim h^{2/3}\(\rr q\)^{1/3}\gtrsim h^{2/3}\lambda_{q}^{1/3}\]
 for all $h$ sufficiently small.
\end{proof}

Lemmas \ref{q''lemma} and \ref{gexists}, and the fact that $\rr q\geq 0$ are the main ingredients needed to adapt the proof from Section 4 of \cite{mine} to prove
\eqref{eq:qest}.

\section{Wick Quantization}
In addition to the Weyl quantization, we will also work with pseudodifferential operators in the Wick quantization. 
Here we provide a brief summary of the relevant properties. More detail can be found in \cite{bc} and \cite{wicketc}.
Let $Y=\(y,\eta\)\in \R^{2n}$. Define $\phi_{Y}$ by
\[\phi_{Y}\(x\)= \pi^{-n/4} e^{-\frac{1}{2}|x-y|^{2}}e^{i\(x-y\)\cdot \eta},\quad \|\phi_{Y}\|_{L^{2}_{x}}=1.\]
Then define
\[\Pi_{Y}u\(x\)=\(u,\phi_{Y}\)\phi_{Y}\(x\),\]
where $\(\cdot ,\cdot\)$ denotes the $L^{2}$ scalar product.
Then for $a\in \Sch'\(\R^{2n}\)$ and $u\in \Sch\(\R^{n}\)$ we can define $a^{Wick}:\Sch\(\R^{n}\)\rightarrow\Sch\(\R^{n}\)$ by
\[a^{Wick}u:=a_{Y}\(\Pi_{Y}u\).\]
To see that this indeed maps to $\Sch\(\R^{n}\)$ we observe that for $u\in\Sch\(\R^{n}\)$
\[\(u,\phi_{Y}\)\in\Sch\(\R^{2n}_{\(y,\eta\)}\),\]
which is verified in Proposition 3.1.6 of \cite{mart}. 
So $\Pi_{Y}u\in\Sch\(\R^{3n}_{\(y,\eta,x\)}\)$, and then applying $a$ in the first two variables leaves $a_{Y}\(\Pi_{Y}u\)\in\Sch\(\R^{n}_{x}\)$.
It follows shortly from the definition that for a symbol $a\in\Sch'\(\R^{2n}\)$ and all $u\in \mathcal{S}\(\mathbb{R}^{n}\)$
\begin{equation}\label{eq:pos}a\geq 0 \Rightarrow \( a^{Wick}u,u\) \geq 0,\end{equation}
and, just as for the Weyl quantization, formal $L^{2}$ adjoints are attained by taking the complex conjugate of the symbol.
\begin{equation}\label{eq:wadj}
\(a^{Wick}\)^{*}=\(\overline{a}\)^{Wick}.
\end{equation}
Furthermore, if $a\in L^{\infty}$, then $a^{Wick}: L^{2}\rightarrow L^{2}$ and
\begin{equation}\label{eq:l2bd}
\|a^{Wick}\|_{L^{2}\rightarrow L^{2}} \leq \|a\|_{L^{\infty}}.    
\end{equation}
We also can relate the Wick quantization of a symbol $a\in S\(m\)$, for some order function $m$, to its Weyl quantization by
\begin{equation}\label{eq:ww}
a^{Wick}= a^{w} + r\(a\)^{w},
\end{equation}
where
\begin{equation}\label{eq:remainder}
r\(a\)\(X\)=\(\pi\)^{-n/2}\int_{0}^{1}\int_{\mathbb{R}^{2n}}\(1-t\)a''\(X+tY\)Y\cdot Y e^{-|Y|^2}dY dt.
\end{equation}
For smooth symbols $a$ and $b$ with
$a\in L^{\infty}\(\R^{2n}\)$ and $\partial^{\alpha}b \in L^{\infty}\(\R^{2n}\)$ for $|\alpha| = 2$ we have the following composition formula
proven in \cite{absource},
\begin{equation}\label{eq:comp}
a^{Wick}b^{Wick} = \(ab - \frac{1}{2}a' \cdot b' + \frac{1}{2i}\left\{ a, b \right\}\)^{Wick} + R, 
\end{equation}
where $\|R\|_{L^2 \rightarrow L^2} \lesssim \|a\|_{L^{\infty}} \sup\limits_{|\alpha|=2}\|\partial^{\alpha}b\|_{L^{\infty}} $.
We will now use this to show that \eqref{eq:qest}, the desired $L^{2}$ lower bound for $q^{w}_{h}-z$ on $\Sch\(\R^{n}\)$, holds.

\section{Proving the Lower Bound for $q^{w}_{h}-z$}

Let $u\in \Sch\(\R^{n}\)$, and let $z\in\C$ satisfy the hypotheses of Theorem \ref{thm1}. 
We will start by using Wick symbol calculus to use $g^{Wick}$ as a bounded multiplier for $q^{Wick}$, which will be related back to $q^{w}$.
By \eqref{eq:wadj}, Wick operators with real symbols are formally self adjoint. Thus,
\begin{multline}\label{eq:c}
{\rm Re}\, \(\left[q\(\sqrt{h}X\)-z\right]^{Wick}u, \left[2-g\(\sqrt{h}X\)\right]^{Wick}u\) = \\
{\rm Re} \( \left[2-g\(\sqrt{h}X\)\right]^{Wick}\left[\, \(q\(\sqrt{h}X\)-z\)\right]^{Wick}u ,\, u\)= \\
 \({\rm Re}\( \left[2-g\(\sqrt{h}X\)\right]^{Wick}\left[\, \(q\(\sqrt{h}X\)-z\)\right]^{Wick}\)u ,\, u\).
\end{multline}
For any Wick symbol $a$ it is true that 
$$
{\rm Re}\, a^{Wick}=\frac{1}{2}\(a^{Wick}+\(a^{Wick}\)^{*}\)
=\frac{1}{2}\(a^{Wick}+\(\overline{a}\)^{Wick}\)=\({\rm Re}\, a\)^{Wick}.
$$
Using this fact, and the composition formula for the Wick quantization, \eqref{eq:wcomp},
\begin{equation}\label{eq:d}
{\rm Re}\, \(\left[2-g\(\sqrt{h}X\)\right]^{Wick}\left[q\(\sqrt{h}X\)-z\right]^{Wick} \)= 
\end{equation}
\begin{multline*}
{\rm Re}\, \bigg[\(2-g\(\sqrt{h}X\)\)\(q\(\sqrt{h}X\)-z\) +
\frac{1}{2}\nabla\(g\(\sqrt{h}X\)\)\cdot\nabla\(q\(\sqrt{h}X\)\)\\
- \frac{1}{2 i}\left\{g\(\sqrt{h}X\),q\(\sqrt{h}X\)\right\}\bigg]^{Wick} + S_{h}\\
=\bigg[\(2-g\(\sqrt{h}X\)\)\(\rr q\(\sqrt{h}X\) - {\rm Re}\, z\) \\
+ \frac{h}{2}g'\(\sqrt{h}X\)\cdot \rr q'\(\sqrt{h}X\) + \frac{h}{2}H_{V_{2}} g\(\sqrt{h}X\)\bigg]^{Wick} + S_{h},
\end{multline*}
where $\|S_{h}\|_{L^{2}\rightarrow L^{2}}=O\(h\)$, because $|g|\leq 1$ and $q''\in S\(1\)$ by Lemma \ref{q''lemma}.
Then, because $|\rr q'|\lesssim \(\rr q\)^{1/2}$ and, by \eqref{eq:g'bdd}, $|g'|\lesssim h^{-1/2}$ we have 
$$ \left|h g'\(\sqrt{h}X\)\cdot \rr q'\(\sqrt{h}X\) \right|\lesssim h^{1/2}\(\rr q\(\sqrt{h}X\)\)^{1/2}$$
$$
\lesssim rh + \frac{1}{r}\rr q\(\sqrt{h}X\),
$$
for arbitrary $r>0$. By taking $r$ large enough the $\frac{1}{r}\rr q\(\sqrt{h}X\)$ term can be absorbed by $\(2-g\(\sqrt{h}X\)\)\rr q\(\sqrt{h}X\)$.

By using \eqref{eq:qwt} we get that for some $C_{1}$, $C_{2} > 0$ and arbitrary $C_{0}>0$,
$$
\(2-g\(\sqrt{h}X\)\)\(\rr q\(\sqrt{h}X\) -{\rm Re}\, z\) 
$$
$$+ \frac{h}{2}g'\(\sqrt{h}X\)\cdot \rr q'\(\sqrt{h}X\) 
+ \frac{h}{2}H_{V_{2}}g\(\sqrt{h}X\)
$$
\begin{equation}\label{eq:a}
\gtrsim \rr q\(\sqrt{h}X\) - 3 max\(0, {\rm Re}\, z\) + \frac{h}{2}H_{V_{2}} g\(\sqrt{h}X\) + O\(h\)
\end{equation}
$$
\gtrsim h^{2/3}\lambda_{q}\(\sqrt{h}X\)^{1/3} - C_{1} max\(0, {\rm Re}\, z\) - C_{2}h
$$
$$
\gtrsim h^{2/3}\(\lambda_{q}\(\sqrt{h}X\)^{1/3}- 2C_{0}C_{1}y^{1/3}\) + C_{0}C_{1} h^{2/3}y^{1/3}
$$
$$
+\ C_{1}\(C_{0} h^{2/3}y^{1/3} - max\(0, {\rm Re}\, z\)\) - C_{2}h.
$$
As we required that ${\rm Re}\, z \leq C_{0} h^{2/3}y^{1/3}$ it follows that
\begin{equation}\label{eq:summary}
h^{2/3}\(\lambda_{q}\(\sqrt{h}X\)^{1/3} - 2C_{0}C_{1}y^{1/3}\) + C_{1}\(C_{0}h^{2/3}y^{1/3} - max\(0, {\rm Re}\, z\)\)
\end{equation}
$$
\geq -2C_{0}C_{1}h^{2/3}y^{1/3}\psi\(\frac{B \lambda_{q}\(\sqrt{h}X\)}{y}\),
$$
where
\begin{equation}\label{eq:bdef}
B=\frac{1}{\(4C_{0}C_{1}\)^3},
\end{equation}
and $\psi$ is the same cutoff as before.
Fix the value of $0<C_{0}\leq 1$ by choosing it small enough such that we can use that $|V_{2}\(x\)|-T\lesssim V_{1}\(x\)+|V_{2}'\(x\)|^{2}$ to get
\begin{equation}\label{eq:bval}
 |q\(X\)|-T \leq \frac{B \lambda_{q}\(X\)}{2},\quad X\in\mathbb{R}^{2n},
\end{equation}
which we will need later.
Substituting \eqref{eq:summary} into \eqref{eq:a} gives
\begin{equation}\label{eq:b}
\(2-g\(\sqrt{h}X\)\)\(\rr q\(\sqrt{h}X\)-{\rm Re}\, z\) + \frac{h}{2}g'\(\sqrt{h}X\)\cdot  \rr q'\(\sqrt{h}X\) 
\end{equation}
$$+ \frac{h}{2}H_{V_{2}} g\(\sqrt{h}X\)$$
$$\gtrsim -2C_{0}C_{1}h^{2/3}y^{1/3}\psi\(\frac{B \lambda_{q}\(\sqrt{h}X\)}{y}\) - C_{2}h + C_{0}C_{1}h^{2/3}y^{1/3}.
$$
Now  \eqref{eq:pos}, \eqref{eq:c}, \eqref{eq:d}, and \eqref{eq:b} imply that, for $h$ sufficiently small, $\rr z\leq C_{0}h^{2/3}y^{1/3}$, and some $C_{4}, C_{5}>0$
$$
{\rm Re}\, \([q\(\sqrt{h}X\)-z]^{Wick}u, [2-g\(\sqrt{h}X\)]^{Wick}u\)+ C_{4}h\|u\|^{2} +
$$
$$C_{5}h^{2/3}y^{1/3}\(\psi\(\frac{B\lambda_{q}\(\sqrt{h}X\)}{y}\)^{Wick}u, u\)
\gtrsim h^{2/3}y^{1/3}\|u\|^{2}_{L^2}.
$$
By the Cauchy-Schwarz inequality and \eqref{eq:l2bd} we get that
$$
\left\|[q\(\sqrt{h}X\)-z]^{Wick}u\right\| + h\|u\|  +
h^{2/3}y^{1/3}\left\|\psi\(\frac{B\lambda_{q}\(\sqrt{h}X\)}{y}\)^{Wick}u\right\|
$$
$$\gtrsim h^{2/3}y^{1/3}\|u\|
$$
Now, as $y\geq Mh$, we pick $M$ sufficiently large so that the $h\|u\|$ term can be absorbed by the right-hand side to get
\begin{equation}\label{eq:wickineq}
\left\|\left[q\(\sqrt{h}X\)-z\right]^{Wick}u\right\| + h^{2/3}y^{1/3}\left\|\psi\(\frac{B\lambda_{q}\(\sqrt{h}X\)}{y}\)^{Wick}u\right\|
\end{equation}
$$
\gtrsim h^{2/3}y^{1/3}\|u\|.
$$

We now want to get the same bound for the Weyl quantizations of these symbols.
Because of \eqref{eq:q''s1}, \eqref{eq:ww}, and the Calder\'{o}n-Vaillancourt Theorem \eqref{eq:cv},
\begin{equation}\label{eq:qw2w}
q\(\sqrt{h}X\)^{Wick}=q\(\sqrt{h}X\)^{w}+O_{L^{2}\rightarrow L^{2}}\(h\).
\end{equation}
In order to do the same for the $\psi$ term, we need bounds on its derivatives. 
To do this we will first note the following bounds on the derivatives of $\lambda_{q}$,
which follow from \eqref{eq:v''s1}, \eqref{eq:q''s1}, and that $\rr q\geq 0$.
\begin{equation}\label{eq:lambda'}
   |\partial^{\alpha}\lambda_{q}|\lesssim |\rr q'|+ |V_{2}'|\lesssim \lambda_{q}^{1/2},\quad |\alpha|=1.
\end{equation}
\begin{equation}\label{eq:lambda''}
    |\partial^{\alpha}\lambda_{q}|\lesssim 1 + |V_{2}'|\lesssim 1 + \lambda_{q}^{1/2},\quad |\alpha|\geq 2.
\end{equation}    
We use these bounds to prove the following lemma.
\begin{lemma}\label{lemma2}
The derivatives of the $\psi$ term obey the following estimate.
\begin{equation}\label{eq:psisize}
\left|\partial^{\alpha}\(\psi\(\frac{B\lambda_{q}\(\sqrt{h}X\)}{y}\)\)\right|\lesssim
\frac{h^{1/2}}{y^{1/2}},\quad |\alpha|\geq 1.
\end{equation}
\end{lemma}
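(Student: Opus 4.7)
The plan is to apply the multivariate chain rule (Faà di Bruno formula) to $\psi \circ f$, where $f(X) := B\lambda_q(\sqrt{h}X)/y$, and to exploit the support condition imposed by $\psi$ together with the bounds \eqref{eq:lambda'} and \eqref{eq:lambda''} on the derivatives of $\lambda_q$. The target estimate is uniform in $|\alpha|$, which is possible precisely because the support of every derivative of $\psi$ traps the argument of $\psi$ in a bounded set.

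First I would observe that since $\psi$ is a fixed smooth cutoff, $\psi^{(j)}$ is supported in $\{|t| \leq 1\}$ for every $j \geq 0$. Thus the estimate only needs to be verified at points where $\lambda_q(\sqrt{h}X) \lesssim y$. On this set, the rescaling identity $\partial^\beta[\lambda_q(\sqrt{h}X)] = h^{|\beta|/2} (\partial^\beta \lambda_q)(\sqrt{h}X)$ combined with \eqref{eq:lambda'} and \eqref{eq:lambda''} yields
\[|\partial^\beta f(X)| \lesssim \frac{h^{1/2}}{y^{1/2}}, \quad |\beta|=1,\]
\[|\partial^\beta f(X)| \lesssim \frac{h^{|\beta|/2}(1+y^{1/2})}{y}, \quad |\beta|\geq 2.\]
For the second line, splitting $(1+y^{1/2})/y$ as $1/y + 1/y^{1/2}$ and using $h \leq 1$ together with the standing hypothesis $y \geq Mh$, a short check shows that both resulting summands are $\lesssim h^{1/2}/y^{1/2}$ whenever $|\beta|\geq 2$. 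Hence every partial derivative of $f$ of order at least one is bounded by $h^{1/2}/y^{1/2}$, and this quantity is itself $\lesssim 1$ under our hypotheses.

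To close the argument I would expand $\partial^\alpha \psi(f(X))$ by the Faà di Bruno formula as a finite sum of terms of the form
\[\psi^{(|\pi|)}(f(X)) \prod_{B \in \pi} \partial^B f(X),\]
where $\pi$ ranges over partitions of the index set associated to $\alpha$ into $|\pi|\geq 1$ blocks. Each scalar $\psi^{(|\pi|)}(f)$ is uniformly bounded, and each factor $\partial^B f$ obeys $|\partial^B f| \lesssim h^{1/2}/y^{1/2} \leq 1$, so the product contains at least one factor of size $h^{1/2}/y^{1/2}$ with the remaining factors bounded by $1$. Summing over the finitely many partitions gives \eqref{eq:psisize}.

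The only delicate step is the second case of the block-derivative bound: the additive constant in \eqref{eq:lambda''} destroys the $\lambda_q^{1/2}$ gain one would like, so one must verify by hand that $h^{|\beta|/2}(1+y^{1/2})/y \lesssim h^{1/2}/y^{1/2}$ for every $|\beta|\geq 2$. This is where the assumption $y \geq Mh$ (allowing $h/y$ to be treated as small) and the smallness of $h$ enter; everything else is bookkeeping with the chain rule.
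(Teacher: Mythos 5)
Your proposal is correct and follows essentially the same route as the paper: restrict to the support where $\lambda_{q}\(\sqrt{h}X\)\lesssim y$, bound first and higher derivatives of the rescaled $\lambda_{q}/y$ by $h^{1/2}/y^{1/2}$ using \eqref{eq:lambda'}, \eqref{eq:lambda''} and $y\geq Mh$, then conclude via the chain rule (Fa\`a di Bruno) since each term carries at least one such factor and the rest are bounded. The only cosmetic difference is that the paper records each chain-rule term as $O\(\(h/y\)^{k/2}\)$ rather than isolating a single small factor, which is the same bookkeeping.
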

\begin{proof}
For $X\in\supp\(\psi\(\frac{B\lambda_{q}\(\sqrt{h}X\)}{y}\)\)$ we have
\[\lambda_{q}\(\sqrt{h}X\)\lesssim y,\]
and so, by \eqref{eq:lambda'}
\[\left|\partial^{\alpha}\(\frac{\lambda_{q}\(\sqrt{h}X\)}{y}\)\right|\lesssim \frac{h^{1/2}\lambda_{q}\(\sqrt{h}X\)^{1/2}}{y}
\lesssim \frac{h^{1/2}}{y^{1/2}},\quad |\alpha|=1,\]
and by \eqref{eq:lambda''}
\[\left|\partial^{\alpha}\(\frac{\lambda_{q}\(\sqrt{h}X\)}{y}\)\right|\lesssim h^{|\alpha|/2}\frac{1+\lambda_{q}\(\sqrt{h}X\)^{1/2}}{y}
\lesssim \frac{h}{y}+\frac{h}{y^{1/2}}\lesssim \frac{h^{1/2}}{y^{1/2}},\quad |\alpha|\geq 2.\]
We can express $\partial^{\alpha}\(\psi\(\frac{B\lambda_{q}\(\sqrt{h}X\)}{y}\)\)$ as a linear combination of terms of the form
\[\psi^{\(k\)}\(\frac{B\lambda_{q}\(\sqrt{h}X\)}{y}\)
\partial^{\gamma_{1}}\(\frac{\lambda_{q}\(\sqrt{h}X\)}{y}\) \ldots \partial^{\gamma_{k}}\(\frac{\lambda_{q}\(\sqrt{h}X\)}{y}\),\]
where $\alpha=\gamma_{1}+\ldots +\gamma_{k}$, $|\gamma_{i}|\geq 1$ for all $i$, $1\leq k\leq |\alpha|$.
Each such term is of size $O\(\(\frac{h}{y}\)^{k/2}\)$, proving the lemma.
\end{proof}
Thus
\begin{equation}\label{eq:psiw2w}
    \psi\(\frac{B\lambda_{q}\(\sqrt{h}X\)}{y}\)^{Wick}=\psi\(\frac{B\lambda_{q}\(\sqrt{h}X\)}{y}\)^{w}+O_{L^{2}\rightarrow L^{2}}\(\frac{h^{1/2}}{y^{1/2}}\).
\end{equation}
It then follows from \eqref{eq:wickineq}, \eqref{eq:qw2w}, and \eqref{eq:psiw2w}, taking $M$ sufficiently large, that
\begin{equation}\label{eq:weylineq}
\left\|\left[q\(\sqrt{h}X\)-z\right]^{w}u\right\| + h^{2/3}y^{1/3}\left\|\psi\(
\frac{B \lambda_{q}\(\sqrt{h}X\)}{y}\)^{w}u\right\|\gtrsim h^{2/3}y^{1/3}\|u\|.
\end{equation}
Now if we can show that the $\psi$ term can be absorbed into the other two we will get the desired inequality, \eqref{eq:qest}.
For the sake of brevity we will henceforth use the notation
\[\Psi\(X\):=\psi\(\frac{B\lambda_{q}\(\sqrt{h}X\)}{y}\).\]
Lemma \ref{lemma2} can then be rephrased as:
\[\Psi'\(X\)\in S\(\frac{h^{1/2}}{y^{1/2}}\).\]
Thus, by applying \eqref{eq:wcomp1} for the $h=1$ quantization we get that
$$
\(\Psi\(X\)^{w}\)^{2}=\Psi^{2}\(X\)^{w} + \frac{h}{y}R_{1}^{w},
$$
for some $R_{1}\in S\(1\)$.
Then by applying \eqref{eq:cv} and using that $\Psi^{w}$ is self adjoint we get
\begin{equation}\label{eq:psicomp}
\left\|\Psi\(X\)^{w}u\right\|^{2}_{L^2} = \(\Psi^2\(X\)^{w}u, u\) + O\(\frac{h}{y}\)\|u\|^{2}_{L^2}.
\end{equation}
To control the first term on the right-hand side we follow a method similar to Lemma 8.2 from \cite{hss} and Lemma 3 from \cite{mine}. 
\begin{lemma}\label{lemma3}
For all $u\in\Sch$, $h>0$ sufficiently small, and $z\in C$ with $|z|>KT+Mh$
$$\(\Psi^{2}\(X\)^{w}u, u\) \leq 
\(\(4\frac{\left|q\(\sqrt{h}X\)-z\right|^2}{y^2}\Psi^{2}\(X\)\)^{w}u, u\)
+O\(\frac{h^{1/2}}{y^{1/2}}\)\|u\|^{2}_{L^2}.$$
\end{lemma}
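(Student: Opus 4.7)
The plan is to define
$$
a(X) := \frac{4|q(\sqrt{h}X) - z|^{2}}{y^{2}}\Psi^{2}(X) - \Psi^{2}(X),
$$
show that $a \geq 0$ pointwise, and then apply the Wick--Weyl conversion \eqref{eq:ww} together with the Wick positivity \eqref{eq:pos}. These together give
$$
(a^{w}u, u) = (a^{Wick}u, u) - (r(a)^{w}u, u) \geq -\|r(a)^{w}\|_{L^{2}\rightarrow L^{2}}\|u\|^{2},
$$
so splitting $a^{w} = (4|q-z|^{2}\Psi^{2}/y^{2})^{w} - (\Psi^{2})^{w}$ and rearranging yields the lemma, provided $\|r(a)^{w}\|_{L^{2}\rightarrow L^{2}} = O(h^{1/2}/y^{1/2})$.

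First I would verify pointwise positivity of $a$. Off $\supp \Psi$ both terms vanish. On $\supp \Psi$ the cutoff forces $B\lambda_{q}(\sqrt{h}X) \leq y$, so by \eqref{eq:bval} one has $|q(\sqrt{h}X)| \leq T + y/2$. Since $|z| = T + y$, the reverse triangle inequality produces $|q(\sqrt{h}X) - z| \geq y/2$, hence $4|q(\sqrt{h}X) - z|^{2}/y^{2} \geq 1$ wherever $\Psi \neq 0$, so $a\geq 0$.

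Next I would estimate the derivatives of $a$, as $r(a)$ is a Gaussian-weighted average of $a''$ by \eqref{eq:remainder}, so derivative bounds on $r(a)$ follow from derivative bounds on $a$ of order at least two. The key inputs are: the first-order estimate $|q'| \lesssim \lambda_{q}^{1/2}$ (from $\rr q \geq 0$ and $q'' \in S(1)$ via Lemma \ref{q''lemma}, exactly as in \eqref{eq:rep'rep}); the support bounds $\lambda_{q}(\sqrt{h}X) \lesssim y$ and $|q(\sqrt{h}X) - z| \lesssim y$ on $\supp \Psi$ (the latter using $T \lesssim y$, which follows from $|z| \geq KT$ with $K > 1$); the chain-rule factor $\partial_{X}q(\sqrt{h}X) = h^{1/2}q'(\sqrt{h}X)$; and the second-order bound $\partial_{X}^{2}q(\sqrt{h}X) = hq''(\sqrt{h}X) \in S(h)$. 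Combining these via Leibniz with Lemma \ref{lemma2} yields $|\partial^{\alpha}a| \lesssim h^{1/2}/y^{1/2}$ for every $|\alpha| \geq 1$, hence the same holds for all derivatives of $r(a)$, and the Calder\'on--Vaillancourt theorem \eqref{eq:cv} delivers $\|r(a)^{w}\|_{L^{2}\rightarrow L^{2}} \lesssim h^{1/2}/y^{1/2}$.

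The main technical obstacle is the Leibniz accounting in the derivative estimates for $f(X) := 4|q(\sqrt{h}X) - z|^{2}/y^{2}$: each differentiation of $q(\sqrt{h}X)$ contributes a factor $h^{1/2}$, and one must verify that after summing over all Leibniz terms, the worst case --- a single derivative falling on the magnitude $|q-z|^2$ and producing the factor $h^{1/2}|q'||q-z|/y^2$ --- still achieves the claimed $h^{1/2}/y^{1/2}$ gain through the support cancellations $|q - z| \lesssim y$ and $|q'(\sqrt{h}X)| \lesssim y^{1/2}$.
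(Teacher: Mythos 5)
Your proposal is correct and follows essentially the same route as the paper: you define the same nonnegative symbol $Q=4|q(\sqrt{h}X)-z|^{2}\Psi^{2}/y^{2}-\Psi^{2}$, verify its pointwise positivity via \eqref{eq:bval} and the reverse triangle inequality, and then combine Wick positivity \eqref{eq:pos} with the Wick--Weyl remainder \eqref{eq:ww}--\eqref{eq:remainder} and Calder\'on--Vaillancourt, using the same support bounds $\lambda_{q}(\sqrt{h}X)\lesssim y$, $T\lesssim y$, and the derivative estimates \eqref{eq:qovery}--\eqref{eq:qy''}. No gaps; your Leibniz accounting for the worst-case term matches the paper's estimates.
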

\begin{proof}
Recalling \eqref{eq:bval}, we see that for $X\in\supp\(\Psi\)$ 
\begin{equation}\label{eq:qonpsi}
\left|q\(\sqrt{h}X\)\right|-T\leq \frac{B\lambda_{q}\(\sqrt{h}X\)}{2} \leq \frac{y}{2}.
\end{equation}
This property is precisely what condition \eqref{eq:vv'} is needed for.
We then have
$$\frac{1}{y}\left|q\(\sqrt{h}X\)-z\right| \geq\frac{1}{y}\(|z|-\left|q\(\sqrt{h}X\)\right|\)$$

$$=\frac{1}{y}\(y+T-\left|q\(\sqrt{h}X\)\right|\)\geq \frac{1}{2},$$
and so
\begin{equation}\label{eq:q}
\Psi^{2}\(X\) \leq
4\frac{\left|q\(\sqrt{h}X\)-z\right|^2}{y^2}\Psi^{2}\(X\),\quad X\in\R^{2n}.
\end{equation}
Let 
\begin{equation}\label{eq:qdef}
Q\(X\)= 4\frac{\left|q\(\sqrt{h}X\)-z\right|^2}{y^2}\Psi^{2}\(X\)
-\Psi^{2}\(X\)\geq 0.
\end{equation}
By \eqref{eq:pos}, \eqref{eq:ww}, and \eqref{eq:remainder} we get that
\begin{equation}\label{eq:qgard}
\(Q^{w}\(x,D_{x}\)u,u\)_{L^2}\ +\quad 
\end{equation}
\[\left\|\pi^{-n/2}\(\int_{0}^{1}\int_{\mathbb{R}^{2n}}\(1-t\)Q''\(X+tY\)Y\cdot Y e^{-|Y|^2}dY dt\)^{w}u\right\|
\left\|u\right\|\geq 0.\]
To estimate the second term, \eqref{eq:cv} implies that we need to estimate the derivatives of order two and higher of $Q$.

As $|z|>KT+Mh$ and $K>1$,
\[y=|z|-T>\(K-1\)T\gtrsim T.\]
So, for $X\in\supp\(\Psi\)$, using \eqref{eq:qonpsi}, $y\gtrsim T$, and $y\gtrsim |z|$, we get the following
\begin{equation}\label{eq:qovery}
    \left| \frac{q\(\sqrt{h}X\)-z}{y}\right|\lesssim \frac{1}{y}\(y+T+|z|\)\lesssim 1.
\end{equation}
For such $X$, using that $|\rr q'|\lesssim \(\rr q\)^{1/2}$, we also have
\begin{equation}\label{eq:qy'}
\left|\partial^{\alpha}\frac{q\(\sqrt{h}X\)-z}{y} \right| \lesssim \frac{h^{1/2}}{y}\lambda_{q}\(\sqrt{h}X\)^{1/2}
\lesssim \frac{h^{1/2}}{y^{1/2}},\, |\alpha|=1
\end{equation}
and
\begin{equation}\label{eq:qy''}
\left| \partial ^{\alpha}\frac{q\(\sqrt{h}X\)-z}{y} \right| \lesssim \frac{h^{|\alpha|/2}}{y},\, |\alpha|\geq 2.
\end{equation}
By the above and \eqref{eq:psisize}, for $|\alpha|\geq 1$,
\[\left| \partial^{\alpha}Q\(X\) \right| \lesssim \frac{h^{1/2}}{y^{1/2}},\quad X\in\R^{2n}.\]
Thus by applying the Calder\'{o}n-Vaillancourt theorem \eqref{eq:cv} we can bound the latter term of \eqref{eq:qgard} as follows, 
$$
\left\|\(\int_{0}^{1}\int_{\mathbb{R}^{2n}}\(1-t\)Q''\(X+tY\)Y\cdot Y e^{-|Y|^2}2^{n}dY dt\)^{w}u\right\|
\lesssim \frac{h^{1/2}}{y^{1/2}}\|u\|.
$$
Therefore \eqref{eq:qgard} implies a variant of the sharp G\r{a}rding inequality (cf. Theorem 4.32 of \cite{zw}) for $Q$,
$$
\(Q^{w}\(x,D_{x}\)u,u\)_{L^2} + O\(\frac{h^{1/2}}{y^{1/2}}\)\|u\|^{2}_{L^2} \geq 0.
$$
By \eqref{eq:qdef} we attain the statement in the lemma.
\end{proof}
Combining \eqref{eq:psicomp} and Lemma \ref{lemma3} we get that
\begin{equation}\label{eq:psitoqpsi}
    \|\Psi^{w}u\|^{2}\leq \(\(4\frac{\left|q\(\sqrt{h}X\)-z\right|^2}{y^2}\Psi^{2}\(X\)\)^{w}u, u\)
+O\(\frac{h^{1/2}}{y^{1/2}}\)\|u\|^{2}_{L^2}.
\end{equation}
Finally, we have to understand the first term on the right side of \eqref{eq:psitoqpsi}. 
The estimates \eqref{eq:psisize}, \eqref{eq:qovery}, \eqref{eq:qy'}, and \eqref{eq:qy''} imply that
$$
\partial^{\alpha}\(\frac{\(q\(\sqrt{h}X\)-z\)}{y}\Psi\(X\)\)
=O\(\(\frac{h}{y}\)^{1/2}\),\, |\alpha|\geq 1.
$$
Then by applying \eqref{eq:wcomp1} we get
\begin{multline*}
4\frac{\left|q\(\sqrt{h}X\)-z\right|^2}{y^2}\Psi^{2}\(X\)\\
=4 \(\frac{\overline{\(q\(\sqrt{h}X\)-z\)}}{y}\Psi\(X\)\#
\frac{\(q\(\sqrt{h}X\)-z\)}{y}\Psi\(X\)\)
+\frac{h}{y}R_{2},
\end{multline*}
where $R_{2}\in S\(1\)$.
We also similarly get from \eqref{eq:psisize}, \eqref{eq:qy'}, \eqref{eq:qy''} and \eqref{eq:wcomp1} that
$$
\Psi\(X\)\# \frac{\(q\(\sqrt{h}X\)-z\)}{y} 
= \frac{\(q\(\sqrt{h}X\)-z\)}{y}\Psi\(X\) + \frac{h}{y}R_{3},
$$
for $R_{3}\in S\(1\)$. 

Now, using this, \eqref{eq:psitoqpsi}, the fact that $\frac{h}{y}\leq \frac{1}{M}$, and \eqref{eq:cv},
we can conclude that
$$
\left\Vert\Psi\(X\)^{w}u\right\Vert^{2}_{L^2}\lesssim \left\Vert \(\Psi\(X\)\)^{w} 
\(\frac{\(q\(\sqrt{h}X\)-z\)}{y}\)^{w}u\right\Vert^{2}_{L^2} + O\(\frac{h^{1/2}}{y^{1/2}}\)\|u\|^2_{L^2}
$$
$$
\lesssim \frac{1}{y^{2}}\left\|\left[q\(\sqrt{h}X\)-z\right]^{w}u\right\|^{2}_{L^2} + O\(\frac{1}{M^{1/2}}\)\|u\|^2_{L^2}.
$$

Plugging this into \eqref{eq:weylineq} we get
\begin{multline*}
\left\|\left[q\(\sqrt{h}X\)-z\right]^{w}u\right\| + \frac{h^{2/3}}{y^{2/3}}\left\|\left[q\(\sqrt{h}X\)-z\right]^{w}u\right\|\\
+ O\(\frac{1}{M^{1/4}}\)h^{2/3}y^{1/3}\|u\|
\gtrsim h^{2/3}y^{1/3}\|u\|.
\end{multline*}
Then taking $M$ sufficiently large yields
$$
\left\|\left[q\(\sqrt{h}X\)-z\right]^{w}u\right\|\gtrsim h^{2/3}y^{1/3}\|u\|.
$$
Finally, by making the symplectic change of coordinates $x\rightarrow \frac{x}{\sqrt{h}}$, $\xi\rightarrow \sqrt{h}\xi$
we obtain the desired estimate,
$$
\left\|\(q^{w}\(x, hD_{x}\) - z\)u\right\|\gtrsim h^{2/3}y^{1/3}\|u\|.
$$
The results of this section can summarized by the following.
\begin{proposition}
For any $K>1$ there exists constants $0<C_{0}\leq 1$, $M\geq 2$, and $h_{0}>0$ such that for all $z\in\C$
with $|z|\geq KT+Mh$ and $\rr z\leq C_{0}h^{2/3}y^{1/3}$ and all $0<h\leq h_{0}$,
\[\|\(q_{h}^{w}-z\)u\|\gtrsim h^{2/3}y^{1/3}\|u\|,\quad u\in\Sch.\]
\end{proposition}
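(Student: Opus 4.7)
The plan is to work with the rescaled symbol $q(\sqrt{h}X)$ so that the problem becomes a non-semiclassical (i.e.\ $h=1$) estimate in Weyl quantization; by a symplectic change of coordinates $x\mapsto x/\sqrt{h}$, $\xi\mapsto\sqrt{h}\xi$, proving $\|(q(\sqrt{h}X)-z)^w u\|\gtrsim h^{2/3}y^{1/3}\|u\|$ is equivalent to the target inequality for $q_h^w-z$. The weight function $g$ from Lemma \ref{gexists} will be used, also evaluated at $\sqrt{h}X$, to build a bounded multiplier $[2-g(\sqrt{h}X)]^{Wick}$; since $|g|\leq 1$, this operator is uniformly bounded on $L^2$ by \eqref{eq:l2bd}, and since its symbol is real, $\rr$ can be pulled inside the inner product by \eqref{eq:wadj}.

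Next I would compute $\rr\bigl([q(\sqrt{h}X)-z]^{Wick}u,\,[2-g(\sqrt{h}X)]^{Wick}u\bigr)$ by moving the multiplier across via adjoints and then applying the Wick composition formula \eqref{eq:comp}. The second-derivative bound $q''\in S(1)$ from Lemma \ref{q''lemma} is essential: it guarantees that the $R$-term in \eqref{eq:comp} has operator norm $O(h)$. This rewrites the pairing as the Wick quantization of
\[(2-g)(\rr q-\rr z)+\tfrac{h}{2}g'\cdot \rr q'+\tfrac{h}{2}H_{V_2}g\]
plus an $O(h)\|u\|^2$ error. The cross term $\tfrac{h}{2}g'\cdot\rr q'$ is handled by $|\rr q'|\lesssim(\rr q)^{1/2}$ (from $\rr q\geq 0$ and $q''\in S(1)$) and $|g'|\lesssim h^{-1/2}$, giving a bound $\lesssim h^{1/2}(\rr q)^{1/2}$ which is absorbed into $(2-g)\rr q$ by Young with a large enough constant.

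I would then apply the key pointwise subelliptic bound \eqref{eq:qwt} for $g$, combined with the hypothesis $\rr z\leq C_0 h^{2/3}y^{1/3}$, to get a lower bound of the form $c_1 h^{2/3}\lambda_q(\sqrt{h}X)^{1/3}-C_2 h$ modulo a term involving $\max(0,\rr z)$. To convert $\lambda_q^{1/3}$ into $y^{1/3}$, I would split based on the cutoff $\Psi(X):=\psi(B\lambda_q(\sqrt{h}X)/y)$: when $\lambda_q\gtrsim y/B$ we simply drop to $\lambda_q^{1/3}\geq $ const$\cdot y^{1/3}$, and when $\lambda_q\lesssim y/B$ we pay an error $C_0 C_1 h^{2/3}y^{1/3}\Psi^{Wick}$. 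Choosing $C_0$ small enough so that $|q|-T\leq B\lambda_q/2$ on $\supp\Psi$ (using the hypothesis $|V_2|-T\lesssim V_1+|V_2'|^2$) sets up the crucial geometric fact for the next step. Cauchy--Schwarz and \eqref{eq:l2bd} convert this into \eqref{eq:wickineq}.

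The main obstacle is to absorb the cutoff term $h^{2/3}y^{1/3}\|\Psi^{Wick}u\|$ back into $\|(q(\sqrt{h}X)-z)^{Wick}u\|$. My plan is first to pass from Wick to Weyl: for $q(\sqrt{h}X)$ the Calder\'on--Vaillancourt theorem \eqref{eq:cv} together with \eqref{eq:ww} gives an $O(h)$ error since $q''\in S(1)$, and for $\Psi$ the analogous derivative estimate $\Psi'\in S((h/y)^{1/2})$ (Lemma \ref{lemma2}) yields an $O((h/y)^{1/2})$ error. The geometric fact from the previous paragraph shows $|q(\sqrt{h}X)-z|\geq y/2$ on $\supp\Psi$, so $\Psi^2\leq 4|q(\sqrt{h}X)-z|^2 y^{-2}\Psi^2$ pointwise. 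Applying a sharp-G\r{a}rding argument (Lemma \ref{lemma3}) to the nonnegative difference bounds $\|\Psi^w u\|^2$ by $\bigl((4|q(\sqrt{h}X)-z|^2 y^{-2}\Psi^2)^w u,u\bigr)$ up to an $O((h/y)^{1/2})$ remainder. Finally, \eqref{eq:wcomp1} lets me factor $(4|q-z|^2 y^{-2}\Psi^2)^w$ as $\Psi^w\cdot(\overline{q-z}/y)^w\cdot((q-z)/y)^w\cdot\Psi^w$ modulo $O(h/y)$ errors, so using boundedness of $\Psi^w$ and $(q-z)^w/y$ the right-hand side becomes $\lesssim y^{-2}\|(q(\sqrt{h}X)-z)^w u\|^2+O(1/M)\|u\|^2$. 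Taking $M$ large enough absorbs the residual $\|u\|^2$ term into the conclusion, and undoing the initial symplectic rescaling yields the proposition.
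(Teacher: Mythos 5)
Your proposal follows the paper's own proof essentially step for step up through \eqref{eq:weylineq}: the rescaling $X\mapsto\sqrt{h}X$, the bounded Wick multiplier $[2-g(\sqrt{h}X)]^{Wick}$ with realness of the symbol and \eqref{eq:wadj}, the composition formula \eqref{eq:comp} with the $O(h)$ remainder coming from $|g|\leq 1$ and $q''\in S(1)$, the absorption of the cross term $\frac{h}{2}g'\cdot\rr q'$ via $|\rr q'|\lesssim(\rr q)^{1/2}$ and $|g'|\lesssim h^{-1/2}$, the subelliptic bound \eqref{eq:qwt}, the introduction of the cutoff $\Psi$ with $C_{0}$ fixed so that \eqref{eq:bval} holds, the Wick-to-Weyl passage using Lemma \ref{q''lemma} and Lemma \ref{lemma2}, and the sharp G\r{a}rding argument of Lemma \ref{lemma3}. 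All of that is correct and is exactly the paper's route.

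The one genuine problem is in the final absorption step. You propose to factor $(4|q(\sqrt{h}X)-z|^{2}y^{-2}\Psi^{2})^{w}$ as $\Psi^{w}\,(\overline{(q(\sqrt{h}X)-z)}/y)^{w}\,((q(\sqrt{h}X)-z)/y)^{w}\,\Psi^{w}$ modulo $O(h/y)$ errors and then to invoke ``boundedness of $(q-z)^{w}/y$''. Two things go wrong. First, $((q(\sqrt{h}X)-z)/y)^{w}$ is not bounded on $L^{2}$ (its symbol grows quadratically), and the composition of the two bare middle factors is not controlled by \eqref{eq:wcomp1} with an $O(h/y)$ error: the estimates \eqref{eq:qovery}--\eqref{eq:qy''} hold only on $\supp\Psi$, while globally the first derivative of $(q(\sqrt{h}X)-z)/y$ is only of size $h^{1/2}\lambda_{q}(\sqrt{h}X)^{1/2}/y$, which is unbounded in $X$, so the remainder for that grouping does not lie in $\frac{h}{y}S(1)$. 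Second, even granting such a factorization, your operator ordering yields $\|((q(\sqrt{h}X)-z)/y)^{w}\Psi^{w}u\|^{2}$, with the cutoff acting first on $u$, from which $\|(q(\sqrt{h}X)-z)^{w}u\|$ cannot be extracted without a further commutation. The paper's fix is to attach the cutoff to each copy of $(q-z)/y$ \emph{before} composing, writing $4|q(\sqrt{h}X)-z|^{2}y^{-2}\Psi^{2}$ as $4(\overline{(q(\sqrt{h}X)-z)}y^{-1}\Psi)\#((q(\sqrt{h}X)-z)y^{-1}\Psi)+\frac{h}{y}R_{2}$, so that every symbol entering the composition is bounded with $O((h/y)^{1/2})$ derivatives, and then to use $\Psi\#\frac{q(\sqrt{h}X)-z}{y}=\frac{q(\sqrt{h}X)-z}{y}\Psi+\frac{h}{y}R_{3}$ so that the bounded operator $\Psi^{w}$ ends up on the left: $\|\Psi^{w}((q(\sqrt{h}X)-z)/y)^{w}u\|\lesssim y^{-1}\|(q(\sqrt{h}X)-z)^{w}u\|$ by \eqref{eq:cv}. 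With that modification your argument closes exactly as in the paper, taking $M$ large and undoing the rescaling.
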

Thus, by Corollary \ref{qthenp} the same holds for $p_{h}^{w}$.
All that remains to complete the proof of Theorem \ref{thm1} is to extend this lower bound to the maximal domain of $p_{h}^{w}$ so that we may conclude the corresponding upper bound for its resolvent.
\section{Attaining the Resolvent Estimate}
We will use the following to finish the proof of Theorem \ref{thm1}.
\begin{proposition}
Let $a\in C^{\infty}\(\R^{2n}\)$ with 
\begin{equation}\label{eq:a'sx}
    a'\in S\(\langle X\rangle\).
\end{equation}
Then the $L^{2}$-graph closure of $a^{w}$ on $\Sch$ has the maximal domain $D_{max}:= \left\{u\in L^{2} : a^{w}u\in L^{2}\right\}$.
\end{proposition}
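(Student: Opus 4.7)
I would prove the proposition via a Friedrichs-type regularization in phase space. The inclusion $D(\overline{a^w|_\Sch}) \subseteq D_{max}$ is immediate from the continuity of $a^w$ on $\Sch'$: if $u_n \in \Sch$ with $u_n \to u$ and $a^w u_n \to v$ in $L^2$, then $a^w u = v$ in the distributional sense, hence $a^w u \in L^2$. The content of the proposition is the reverse inclusion $D_{max} \subseteq D(\overline{a^w|_\Sch})$.

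Fix $u \in D_{max}$, pick $\chi \in C_c^\infty(\R^{2n})$ with $\chi \equiv 1$ near $0$, and set $\chi_\epsilon(X) := \chi(\epsilon X)$, $u_\epsilon := \chi_\epsilon^w u$. Since $\chi_\epsilon \in \Sch(\R^{2n})$, the operator $\chi_\epsilon^w$ has Schwartz kernel and maps $\Sch'(\R^n) \to \Sch(\R^n)$, so $u_\epsilon \in \Sch$. It then suffices to show $u_\epsilon \to u$ and $a^w u_\epsilon \to a^w u$ in $L^2$. The first convergence follows because $\chi_\epsilon \in S(1)$ uniformly in $\epsilon$ (all derivatives $|\partial^\alpha \chi_\epsilon|$ are $O(\epsilon^{|\alpha|})$), so by Calder\'on-Vaillancourt \eqref{eq:cv} the family $\chi_\epsilon^w$ is uniformly bounded on $L^2$; direct verification on Schwartz functions plus density yields strong convergence. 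For the second I would write
\[a^w u_\epsilon = \chi_\epsilon^w (a^w u) + [a^w, \chi_\epsilon^w] u.\]
The first summand converges to $a^w u$ by applying the previous argument to $a^w u \in L^2$, reducing matters to showing the commutator term goes to zero in $L^2$.

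By the Moyal expansion the symbol of $[a^w, \chi_\epsilon^w]$ is $\tfrac{1}{i}\{a, \chi_\epsilon\} + E_\epsilon$, where $E_\epsilon$ collects the odd higher-order terms, each involving a product $\partial^\alpha a \cdot \partial^\beta \chi_\epsilon$ with $|\alpha| = |\beta| \ge 3$. On the support of $\partial^\beta \chi_\epsilon$ one has $|X| \sim 1/\epsilon$, so \eqref{eq:a'sx} gives $|\partial^\alpha a| \lesssim \langle X \rangle \sim 1/\epsilon$ while $|\partial^\beta \chi_\epsilon| \lesssim \epsilon^{|\beta|}$; these combine to give $E_\epsilon = O(\epsilon^2)$ in $S(1)$, whence $\|E_\epsilon^w\|_{L^2 \to L^2} = O(\epsilon^2)$ by \eqref{eq:cv}. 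The hard part is the principal term $(\{a,\chi_\epsilon\}/i)^w$: its symbol is only $O(1)$ in $L^\infty$, so its operator norm need not shrink. However, $\{a, \chi_\epsilon\}$ is supported in $|X| \sim 1/\epsilon$ and vanishes identically on any fixed compact set once $\epsilon$ is small enough (since $\chi$ is constant near $0$), with all derivatives of order $\ge 1$ being $O(\epsilon)$. I would then conclude $(\{a, \chi_\epsilon\}/i)^w u \to 0$ in $L^2$ via a dominated-convergence-type argument for Weyl quantization: first for $v \in \Sch$ via the explicit oscillatory-integral kernel together with integration by parts in $\xi$ to secure integrability in the phase variable, then extending to all of $L^2$ using the uniform Calder\'on-Vaillancourt bound. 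This final step is where the main obstacle lies, since the strong convergence must exploit the phase-space localization of $\{a, \chi_\epsilon\}$ rather than any decay in operator norm.
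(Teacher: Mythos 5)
Your overall architecture coincides with the paper's: regularize with $\chi_\epsilon^w$, $\chi_\epsilon(X)=\chi(\epsilon X)$, split $a^w u_\epsilon=\chi_\epsilon^w a^w u+[a^w,\chi_\epsilon^w]u$, get a uniform $L^2$ bound on the commutator from Calder\'{o}n--Vaillancourt, and prove strong convergence to zero on a dense subset by exploiting that the bracket lives where $|X|\sim\epsilon^{-1}$. The genuine gap is the step you yourself flag as the ``main obstacle'': you never actually prove $(\{a,\chi_\epsilon\}/i)^w v\rightarrow 0$, and the two tools you name would not close it. Integration by parts in $\xi$ produces factors of $\langle x-y\rangle^{-1}$, so it yields smallness only on the portion of the bracket supported where the \emph{spatial} variable is of size $\epsilon^{-1}$ (there $|x-y|\sim\epsilon^{-1}$ when $v$ is spatially localized). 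It gains nothing on the portion supported where $|\xi|\sim\epsilon^{-1}$ but $x$ stays bounded: there the symbol is genuinely of size $1$ (the factor $|\partial_\xi\chi_\epsilon|\sim\epsilon$ is multiplied by $|\partial_x a|\lesssim\langle X\rangle\sim\epsilon^{-1}$), the oscillatory kernel is not absolutely convergent and carries no $\epsilon$-decay, so a ``dominated-convergence-type argument'' has nothing quantitative to run on. What is missing is the mechanism that converts largeness of $|\xi|$ on the support into decay, namely moving derivatives onto the smooth test function $v$ (equivalently, factoring out an elliptic weight $1+|x|^2+|\xi|^2$ and using that it is $\gtrsim\epsilon^{-2}$ on the support).

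This is exactly how the paper closes the argument. It takes the cutoff in product form $\chi_\delta=\phi(\delta x)\phi(\delta\xi)$, so the bracket splits into a term carried by $\phi'(\delta\xi)$, supported in $|\xi|\sim\delta^{-1}$, and a term carried by $\phi'(\delta x)$, supported in $|x|\sim\delta^{-1}$, and it tests against $u\in C_c^\infty$. For the first term it writes $e^{i(x-y)\cdot\xi}=-|\xi|^{-2}\Delta_y e^{i(x-y)\cdot\xi}$ and integrates by parts in $y$, gaining $|\xi|^{-2}\sim\delta^2$ at the price of putting up to two derivatives on $u$ --- precisely the use of the smoothness of the dense-subset element that your sketch omits. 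For the second term the compact support of $u$ forces $|x-y|\sim\delta^{-1}$, and the integration by parts in $\xi$ that you do describe gives the $\delta^2$ gain; Calder\'{o}n--Vaillancourt, resp.\ Theorem 4.20 of \cite{zw}, then bounds both pieces by $O(\delta^2)$, and the uniform commutator bound plus density finishes. A minor further point: your description of the higher-order Moyal terms as a finite sum of products $\partial^\alpha a\,\partial^\beta\chi_\epsilon$ with $|\alpha|=|\beta|\geq 3$ glosses over the integral remainder; but since $a''\in S(\langle X\rangle)$ and $\chi_\epsilon''\in S(\epsilon\langle X\rangle^{-1})$, the standard composition theorem already gives an $O(\epsilon)$ remainder in $S(1)$ as in the paper, which suffices --- this is cosmetic compared with the missing convergence argument for the principal term.
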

\begin{proof}
To show that the graph closure of $a^{w}\(x,D_{x}\)$ on $\Sch\(\R^{n}\)$ has domain 
$D_{max}$ we follow a method
from H\"{o}rmander found in \cite{horm}. Let $\chi_{\delta}\in\Sch\(\R^{2n}\)$ be a family of symbols parametrized by $\delta>0$ such that
$\chi_{\delta}^{w}: L^{2}\rightarrow \Sch$ is a bounded family of operators
with
$\chi_{\delta}^{w}u\rightarrow u$ in $L^2$ as $\delta\rightarrow 0$ for all $u\in L^{2}$.
If 
\begin{equation}\label{eq:gc}
\(a^{w}\chi_{\delta}^{w}-\chi_{\delta}^{w}a^{w}\)u \rightarrow 0
\end{equation}
in $L^2$ as $\delta\rightarrow 0$ for all $u\in D_{max}$
then $u_{\delta}:=\chi_{\delta}^{w}u$ is a sequence of functions in $\Sch$ converging to $u$ and
$a^{w}u_{\delta}\rightarrow a^{w}u$ in $L^{2}$, thus the domain of the graph closure of $a^{w}$ is $D_{max}$.

To accomplish this, let $\phi\in C_{c}^{\infty}\(\R^{n},[0,1]\)$ be a cutoff function with $\phi\(x\)=1$ for $x$ in a neighborhood of $0$.
Then define
\begin{equation*}
    \chi_{\delta}= \(\phi\(\delta x\)\phi\(\delta \xi\)\).
\end{equation*}
We then have that $\chi_{\delta}^{w}:L^{2}\rightarrow \Sch$ and $\chi_{\delta}^{w}u\rightarrow u$ in $L^2$ as $\delta\rightarrow 0$ for all $u\in L^2$
as desired, which is quick to verify using Weyl calculus and Parseval's theorem.
We then need to check \eqref{eq:gc}. This can be accomplished using some Weyl symbol calculus for the commutator 
$[a^{w},\chi_{\delta}^{w}]$.
For $X\in\supp\(\chi_{\delta}\)$ it holds that $|X|\lesssim \delta^{-1}$. We also have that $|\partial^{\alpha}\chi_{\delta}|\lesssim \delta^{2}$ 
for $|\alpha|\geq 2$, 
and so $\chi_{\delta}''\in S\(\delta\langle X\rangle^{-1}\)$ uniformly in $\delta$.
    Using this and \eqref{eq:a'sx} it follows that for some $R_{4}\in S\(1\)$ we have
\begin{equation*}
[a^{w},\chi_{\delta}^{w}]=\(-i\left\{a\(x,\xi\),\phi\(\delta x\)\phi\(\delta \xi\)\right\}+ \delta R_{4}\)^{w}
\end{equation*}
\begin{equation}\label{eq:123}
=-i\delta\(\partial_{\xi}a \cdot \phi'\(\delta x\)\phi\(\delta \xi\)\)^{w}
+i\delta\(\partial_{x}a \cdot \phi'\(\delta \xi\)\phi\(\delta x\)\)^{w} u +\delta R_{4}^{w}
\end{equation}
$$
=I + II + III.
$$
Using \eqref{eq:a'sx} and the fact that $|X|\lesssim \delta^{-1}$ on $\supp\(\phi\(\delta x\)\phi\(\delta \xi\)\)$ we get
\begin{equation}\label{eq:achix}
    \left|\delta \partial^{\alpha} \(\partial_{\xi}a\cdot \phi'\(\delta x\)\phi\(\delta \xi\)\)\right|=O\(1\),\quad \forall \alpha
\end{equation}
and
\begin{equation}\label{eq:achixi}
\left|\delta \partial^{\alpha} \(\partial_{x}a\cdot \phi'\(\delta \xi\)\phi\(\delta x\)\)\right|=O\(1\),\quad \forall \alpha.
\end{equation}
Thus by \eqref{eq:cv}
\[\left\|[a^{w},\chi_{\delta}^{w}]\right\|_{L^{2}\rightarrow L^{2}}=O\(1\). \]
It thus suffices to show that $[a^{w},\chi_{\delta}^{w}]u\rightarrow 0$ for all $u$ in a dense subset of $L^{2}$.
Term $III$ is easily dealt with because as $\delta\rightarrow 0$, 
\[\|III u\|=O\(\delta\)\|u\|\rightarrow 0,\quad u\in L^{2}.\]
To deal with terms $I$ and $II$, let $u\in C_{c}^{\infty}\(\R^{n}\)$.
Note that the Weyl symbol from $II$ is supported where $|\xi|\sim \delta^{-1}$ and is in $S\(1\)$ by \eqref{eq:achixi}.
Then, using the definition of the Weyl quantization and integration by parts, 
\[II u= \frac{i\delta}{\(2\pi\)^{n}}\int_{\mathbb{R}^{2n}} e^{i\(x-y\)\cdot\xi}
\(\partial_{x}a\)\(\frac{x+y}{2}, \xi\)\cdot \phi'\(\delta \xi\)\phi\(\frac{\delta\(x+y\)}{2}\) u\(y\) dy d\xi \]
\[=\frac{-i\delta}{\(2\pi\)^{n}}\int_{\mathbb{R}^{2n}}\frac{e^{i\(x-y\)\cdot\xi}}{|\xi|^{2}} \Delta_{y}\(\(\partial_{x}a\)\(\frac{x+y}{2}, \xi\)\cdot \phi'\(\delta \xi\)\phi\(\frac{\delta\(x+y\)}{2}\) u\(y\)\) dy d\xi.\]
\[=\sum\limits_{|\alpha|\leq 2}b_{\alpha}^{w}\partial^{\alpha}u,\]
where each $b_{\alpha}\in S\(\delta^{2}\)$. Thus
\[\|IIu\|\lesssim \delta^{2}\rightarrow 0.\]

Similarly, the Weyl symbol in $I$ is in $S\(1\)$ by \eqref{eq:achix}, and
\[I u=\frac{-i\delta}{\(2\pi\)^{n}}\int_{\mathbb{R}^{2n}} e^{i\(x-y\)\cdot\xi}
\(\partial_{\xi}a\)\(\frac{x+y}{2}, \xi\)\cdot \phi'\(\frac{\delta\(x+y\)}{2}\)\phi\(\delta \xi\) u\(y\) dy d\xi \]
\[=\frac{i\delta}{\(2\pi\)^{n}}\int_{\mathbb{R}^{2n}}\frac{e^{i\(x-y\)\cdot\xi}}{|x-y|^{2}} \Delta_{\xi}\(\(\partial_{\xi}a\)\(\frac{x+y}{2}, \xi\)\cdot \phi'\(\frac{\delta\(x+y\)}{2}\)\phi\(\delta \xi\)\) u\(y\) dy d\xi.\]
The integrand is supported where $|x+y|\sim \delta^{-1}$ and $|y|\lesssim 1$. So for $\delta$ sufficiently small $|x-y|\sim \delta^{-1}$.
Thus
\[I u= \frac{1}{\(2\pi\)^{n}}\int_{\R^{2n}}e^{i\(x-y\)\cdot\xi}p\(x,y,\xi\)u\(y\)dy d\xi,\]
where $\partial^{\alpha}p=O\(\delta^{2}\)$ for all $\alpha$. Thus by Theorem 4.20 of \cite{zw}
\[\|Iu\|\lesssim \delta^{2}\rightarrow 0.\]

Therefore \eqref{eq:gc} holds, which tells us that the graph closure of $a^{w}$ on $\Sch$ 
has the domain $D_{max}$. 
\end{proof}
The above lemma applies to $p^{w}\(x,hD_{x}\)-z$ as we have $p'\in S\(\langle X\rangle\)$ from \eqref{eq:p'x}.
Let $P$ denote $p^{w}\(x,hD_{x}\)$ extended by graph closure to its maximal domain, $D\(P\):=\{u\in L^{2}:Pu\in L^{2}\}$. We then have that
\[\|\(P-z\)u\|\gtrsim h^{2/3}y^{1/3}\|u\|,\quad \forall u\in D\(P\),\]
and so $P-z$ is injective with closed range.
We can apply the same argument to the formal adjoint of $p^{w}-z$ on $\Sch$, $\overline{p}^{w}-\overline{z}=\(hD_{x}-A\)^{2}+\overline{V}\(x\)-\overline{z}$,
and we similarly get its graph closure, $\overline{P}-\overline{z}$, is also injective with maximal domain and closed range. 
Thus $\overline{P}=P^{*}$, and $P-z$ in invertible.
We then get the desired resolvent estimate,
\[\|\(P-z\)^{-1}u\|\lesssim h^{-2/3}\(|z|-T\)^{-1/3}\|u\|.\]

\textit{Acknowledgment.} I am very grateful to Michael Hitrik whose input was invaluable in the creation of this paper.

\end{document}